\documentclass[12pt,reqno]{amsart}
\usepackage{amsmath, amscd}
\usepackage{amsfonts}
\usepackage{version}
\usepackage{amssymb}
\usepackage{amssymb,amsmath}
\usepackage{ifthen}
\usepackage{epsfig}
\usepackage{graphicx}
\usepackage{caption}
\usepackage{subcaption}

\setlength{\textheight}{20cm} \textwidth16cm \hoffset=-2truecm

\newcommand{\Gauss}{{\null_2F_1}}

\newcommand{\al}{\alpha}

\newcommand{\C}{\mathbb C}

\newcommand{\D}{\mathbb D}

\newcommand{\Ao}{\mathfrak A}

\newcommand{\Ff}{\mathfrak F}
\newcommand{\Gf}{\mathfrak G}

\newcommand  {\Id} {\mathop{\rm Id}\nolimits}

\renewcommand{\Re}{\mathop{\rm Re}\nolimits}

\newtheorem{theorem}{Theorem}[section]
\newtheorem{lemma}[theorem]{Lemma}

\newtheorem{corollary}[theorem]{Corollary}
\newtheorem{question}{Question}
\newtheorem{conjecture}{Conjecture}

\newtheorem{definition}[theorem]{Definition}

\numberwithin{equation}{section}

\numberwithin{equation}{section}

%
%
%
%

\begin{document}
\title{On the hypergeometric function and families of holomorphic functions}

\author[M. Elin]{Mark Elin}
\address{M. Elin: Department of Mathematics, Braude College of Engeneering, Karmiel 21982, Israel.}
\email{mark\_elin@braude.ac.il}

\author[F. Jacobzon]{Fiana Jacobzon}
\address{F. Jacobzon: Department of Mathematics, Braude College of Engeneering, Karmiel 21982, Israel.}
\email{fiana@braude.ac.il}

\date{\today}
\subjclass[2020]{Primary 30C55; Secondary 33C05}
\keywords{hypergeometric function; filtration; lattice; quasi-extremum}

\begin{abstract}
In this work, we examine one two-parameter family of sets consisting of  functions holomorphic in the unit disk, previously investigated by several mathematicians. We focus on the set-theoretic properties of this family, identify the general form of filtrations within it, and discover that it is not a lattice. This insight motivates us to introduce a refined concept of quasi-infima and quasi-suprema, and to establish their complete description.

Unexpectedly, some new properties of the Gau\ss\  hypergeometric function play a crucial role in our investigation. 
\end{abstract}

\maketitle

\section{Introduction}\label{sect-introd}

The paper explores  sets $\Ao_s^t$ of functions that are holomorphic in the open unit disk $\D$, normalized by $f(0)=f'(0)-1=0$  and satisfy the inequality
\[
\Re \left[(s-1) \frac{f(z)}{z} + f'(z) \right]\geq st, \  z\in \D\setminus \{0\}, 
\]
where $s>0$ and $0\ge t<1$. In addition to intrinsic interest, these sets appeared  in the investigation of extreme points of classes of univalent functions in~\cite{Hal}, in a relation to certain integral transforms, see~\cite{K-R}, as well as in the study of infinitesimal generators of semigroups in~\cite{E-J-survey}. For more results on different families of holomorphic functions the reader can consult the book \cite{GAW}.  Here we are interested in the set-theoretic structure of the family $\Ao:= \left\{ \Ao_s^t\right\}$.   \vspace{2mm}

It appears that to investigate certain set-theoretic properties, a prerequisite understanding of Gau\ss\  hypergeometric functions is necessary.  In this connection, it should be noted that in recent decades many authors have studied geometric properties of hypergeometric functions (see, for example,  \cite{BPV02, SW17, W22}). New results regarding sums of products and ratio of hypergeometric functions were established in \cite{D-K-21, K-K-21}. In \cite{OP},  the zero-balanced  hypergeometric function $\Gauss(1,s;s+1;z)$ was applied to establishing new conditions for univalence and starlikeness of certain transforms.
\vspace{2mm}
 
  Section~\ref{sect-some_functions} considers a zero-balanced  hypergeometric function $\Gauss(1,s;s+1;z)$.  We discover its subtle  characteristics as a function of  $s$. In the subsequent sections, we elaborate on an approach that capitalizes on the dependence of the hypergeometric function $\Gauss(1,s;s+1;z)$ on its parameter.

In Section~\ref{sect-2-param}, we concentrate on the two-parameter family  $\Ao$ which is  the main object of the study in this paper. Conditions that entail/exclude the inclusion of two elements of this family into one another are derived.  \vspace{2mm}

The results on the inclusion relation are applied in Section~\ref{sect-filtra} to answer our main questions. The first one is

$\bullet$ {\it How to characterize all filtrations included in this family?   } Recall that a one-parameter family of sets $\left\{\Ff_t\right\}$ is a filtration (see, for example, \cite{BCDES, E-J-survey, ESS}) if it is ordered, more precisely, $\Ff_s\subset\Ff_t$ whenever $s<t$.

This problem is partially addressed in \cite{E-J-survey}. In Theorem~\ref{thm-filtr} we give the complete answer. 

\vspace{2mm}
Another question is 

$\bullet$ {\it Is the whole family a lattice?   } Recall that a partially ordered family $\Gf=\left\{\Gf_\al\right\}$ endowed with the relation $\subset$  is 
 lattice if each pair of elements has the unique supremum and the unique infimum.

By definition, the supremum of the pair $\Gf_1,\Gf_2\in\Gf$  (if it exists) is the element of $\Gf$  denoted by  $\sup(\Gf_1,\Gf_2)$   such that  $\Gf_1 \cup \Gf_2 \subset \sup(\Gf_1,\Gf_2 )$ and if $\Gf_1\cup\Gf_2 \subset \Gf_*$ for some $\Gf_*\in\Gf$, then $\sup(\Gf_1,\Gf_2 )\subset \Gf_*$. Analogously, the infimum is the element $\inf(\Gf_1,\Gf_2 )$ such that $\inf(\Gf_1,\Gf_2)\subset \Gf_1\cap \Gf_2$ and the inclusion $\Gf_*\subset \Gf_1\cap\Gf_2$ implies $\Gf_*\subset\inf(\Gf_1,\Gf_2)$.

\vspace{2mm}
Definition~\ref{def-quasi} introduces refined concepts: sets of quasi-infima and quasi-suprema. We give the complete description of quasi-extrema for each pair of elements of $\Ao$ in Theorem~\ref{thm-supre}.

Furthermore, the observation below shows that if a pair $\Gf_1, \Gf_2 \in\Gf $ has a supremum, then the quasi-supremum coincides with the supremum and so is unique. Since, according to our results, it is not the case that for every pair of elements of $\Ao$ there is a unque quasi-supremum, we conclude: 
 
 \vspace{1mm}
{\it  The family $\Ao= \left\{ \Ao_s^t\right\}$ is not a lattice.}

\vspace{2mm}
In the last Section~\ref{sect-conclu}, we pose several questions for a forthcoming investigation.

\bigskip

\section{Some new properties of the hypergeometric function}\label{sect-some_functions}

\setcounter{equation}{0}

To prove the main result of this section we need two auxiliary lemmata. 

\begin{lemma}\label{lem-uniq-root}
Let $\psi_1$ and $\psi_2$ be continuous functions defined for $x>0$ by the formulas
 \begin{equation*}
\psi_1(x):=  \frac{2(1+x) }{ x^2}\,  \log\left(1+\frac{x^2}{4(1+x)}\right), \quad \psi_2(x):= \frac{2+x +(1+x)\log(1+x)}{(2+x)^2}
\end{equation*}
and $\psi_1(0)=\psi_2(0)=\frac12$. Then the equation $\psi_1(x)=\psi_2(x)$ has a unique solution in $(0,\infty)$.
\end{lemma}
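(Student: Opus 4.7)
The plan is to reduce the transcendental equation $\psi_1(x) = \psi_2(x)$ to a single-variable calculus problem via the substitution
\[
\tau = \frac{x}{2+x},
\]
which is a smooth bijection $(0,\infty) \to (0,1)$. Rewrite $\psi_1$ first via the identity $1 + \frac{x^2}{4(1+x)} = \frac{(2+x)^2}{4(1+x)}$, then use the relations $1+x = \frac{1+\tau}{1-\tau}$, $\log(1+x) = 2\operatorname{artanh}\tau$, and $\log\frac{(2+x)^2}{4(1+x)} = -\log(1-\tau^2)$ to get
\[
\psi_1 = -\frac{(1-\tau^2)\log(1-\tau^2)}{2\tau^2}, \qquad \psi_2 = \frac{(1-\tau)\bigl[1 + (1+\tau)\operatorname{artanh}\tau\bigr]}{2}.
\]
Multiplying out and simplifying, the equation $\psi_1 = \psi_2$ is equivalent to $G(\tau) = 0$, where
\[
G(\tau) := -\log(1-\tau^2) - \tau^2 \operatorname{artanh}\tau - \frac{\tau^2}{1+\tau}, \qquad \tau \in (0,1).
\]

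For existence I would establish the endpoint behavior: a Taylor expansion gives $G(\tau) = -\tfrac{1}{2}\tau^4 + O(\tau^5)$ near $0$, so $G<0$ just after the origin, while as $\tau \to 1^-$ the combination $-\log(1-\tau^2) - \tau^2 \operatorname{artanh}\tau$ grows like $-\tfrac{1}{2}\log(1-\tau) \to +\infty$, dominating the bounded third term. Hence by the intermediate value theorem, $G$ has at least one zero in $(0,1)$.

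Uniqueness comes from a derivative analysis. A direct differentiation with a clean algebraic cancellation (on the common denominator $(1-\tau)(1+\tau)^2$) gives
\[
G'(\tau) = 2\tau\, R(\tau), \qquad R(\tau) := \frac{\tau}{(1-\tau)(1+\tau)^2} - \operatorname{artanh}\tau,
\]
and a further differentiation, combined with simplifying the rational part, yields
\[
R'(\tau) = \frac{\tau(\tau^2 + 3\tau - 2)}{(1-\tau)^2(1+\tau)^3}.
\]
On $(0,1)$ the sign of $R'$ equals that of $\tau^2 + 3\tau - 2$, whose unique root in $(0,1)$ is $\tau_0 = \tfrac{\sqrt{17}-3}{2}$. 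Since $R(0) = 0$ and $R(\tau) \to +\infty$ as $\tau \to 1^-$, the function $R$ strictly decreases on $(0,\tau_0)$ to a negative minimum and strictly increases on $(\tau_0,1)$, so $R$ has exactly one zero $\tau_1 \in (\tau_0, 1)$. Consequently $G'$ has a unique zero $\tau_1$ in $(0,1)$, with $G' < 0$ on $(0,\tau_1)$ and $G' > 0$ on $(\tau_1, 1)$. Combined with $G(0^+) = 0$ and $G(1^-) = +\infty$, this forces $G$ to decrease strictly from $0$ to a negative minimum and then increase strictly to $+\infty$, giving exactly one zero of $G$ in $(0,1)$. Undoing the substitution yields the unique solution of $\psi_1 = \psi_2$ in $(0,\infty)$.

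The main obstacle I anticipate is spotting the correct substitution. Without it, $\psi_1 - \psi_2$ involves logarithms of unrelated arguments, both $\psi_i$ tend to $0$ at infinity at comparable rates, and a direct monotonicity analysis of the difference is awkward. The key realization is that the arguments $(2+x)^2/(4(1+x))$ and $1+x$ are both simple rational functions of $\tau = x/(2+x)$, which collapses $G$ to an elementary transcendental expression and reduces the critical-point equation for $G$ to a quadratic in $\tau$.
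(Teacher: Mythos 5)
Your proof is correct, and it takes a genuinely different — and arguably stronger — route than the paper's. I verified the key computations: the substitution $\tau=x/(2+x)$ does give $1+\frac{x^2}{4(1+x)}=\frac{(2+x)^2}{4(1+x)}=\frac{1}{1-\tau^2}$ and the stated forms of $\psi_1,\psi_2$; clearing denominators yields exactly your $G(\tau)$; the expansion $G(\tau)=-\frac12\tau^4+\frac23\tau^5+O(\tau^6)$ is right; the rational part of $G'$ collapses to $\frac{2\tau^2}{(1-\tau)(1+\tau)^2}$ so that $G'=2\tau R$; and the numerator of $R'$ is $\tau(\tau^3+4\tau^2+\tau-2)=\tau(1+\tau)(\tau^2+3\tau-2)$, confirming your formula for $R'$. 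With $R(0)=0$, $R$ decreasing then increasing with $R(1^-)=+\infty$, one gets a single sign change of $G'$, and since $G(0^+)=0$ and $G(1^-)=+\infty$, the function $G$ dips below zero and returns, crossing zero exactly once. The paper instead works directly with $\psi_1,\psi_2$ in the variable $x$ and splits $(0,\infty)$ into three ranges: series estimates show no root on $(0,0.4]$, a monotonicity argument (one side increasing, the other decreasing after multiplying by $(2+x)/\log(1+x)$) gives exactly one root on $(10,\infty)$, and the absence of roots on $[0.4,10]$ is settled by a \emph{numerically computed} logarithmic residue over a rectangle in the complex plane (a Maple evaluation rounded to the nearest integer). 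Your argument buys a fully analytic, computer-free proof with a single clean monotonicity structure; what it costs is only the nonobvious observation that both logarithm arguments become rational in $\tau$, which you identified explicitly. This is a genuine improvement in rigor over the paper's middle step.
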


The proof of this lemma is very technical and long. For this reason, we present it in Appendix at the end of the paper.

The next assertion is a simple consequence of the theorem on integral average.
\begin{lemma}\label{lem-integral}
  Let $-\infty\le a<b\le\infty$ and functions $\phi,\psi\in C(a,b)$ satisfy
  \begin{itemize}
    \item [(i)] $\phi$ is bounded, positive and decreasing;
    \item [(ii)] there is $t_0\in(a,b)$ such that $\psi(t)<0$ as $t\in(a,t_0)$ and  $\psi(t)>0$ as $t\in(t_0,b)$;
    \item [(iii)] the improper integral $\displaystyle\int_a^b \psi(t)dt$ equals zero.
  \end{itemize}
  Then $\int_a^b \phi(t) \psi(t)dt <0$.
\end{lemma}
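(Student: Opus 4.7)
The plan is to use $\phi(t_0)$ as a pivot constant and exploit condition (iii) to rewrite the integral in a form where the sign of the integrand can be read off directly from (i) and (ii).

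First I would observe that, by hypothesis (iii), we have $\int_a^b \phi(t_0)\psi(t)\,dt=\phi(t_0)\cdot 0=0$, so
\[
\int_a^b \phi(t)\psi(t)\,dt \;=\; \int_a^b \bigl[\phi(t)-\phi(t_0)\bigr]\psi(t)\,dt.
\]
Boundedness of $\phi$ together with the existence of the improper integral in (iii) guarantees that the rewritten integral converges absolutely on both $(a,t_0)$ and $(t_0,b)$, so splitting at $t_0$ is legitimate:
\[
\int_a^b \phi(t)\psi(t)\,dt \;=\; \int_a^{t_0}\bigl[\phi(t)-\phi(t_0)\bigr]\psi(t)\,dt +\int_{t_0}^{b}\bigl[\phi(t)-\phi(t_0)\bigr]\psi(t)\,dt.
\]

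Next I would analyze the sign of the integrand in each piece. Since $\phi$ is (strictly) decreasing on $(a,b)$, one has $\phi(t)-\phi(t_0)>0$ on $(a,t_0)$ and $\phi(t)-\phi(t_0)<0$ on $(t_0,b)$. Combined with the sign information of $\psi$ supplied by (ii), the product $[\phi(t)-\phi(t_0)]\psi(t)$ is \emph{strictly negative} on $(a,t_0)\cup(t_0,b)$. Continuity of $\phi,\psi$ then gives that each of the two integrals is $\le 0$, and in fact $<0$ unless the integrand vanishes almost everywhere on an interval, which it does not since it is continuous and strictly negative on a set of positive measure. Adding the two strictly negative contributions yields $\int_a^b\phi(t)\psi(t)\,dt<0$.

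The only mildly delicate point is the convergence issue at the endpoints $a$ or $b$ when one of them is infinite: I would handle this by noting that, since $\phi$ is bounded, $|\phi(t)-\phi(t_0)|\le 2\|\phi\|_\infty$, so the integrand is dominated by a constant multiple of $|\psi|$, and absolute convergence of $\int_a^b\psi$ (in the improper sense of (iii), after splitting at $t_0$ where $\psi$ changes sign) yields absolute convergence of the rewritten integral. No step is really the main obstacle; the argument is essentially the classical ``Chebyshev-type'' monotone rearrangement trick, and the only thing to be careful about is the bookkeeping at infinite endpoints, which is exactly what hypothesis~(i) on boundedness is there to handle.
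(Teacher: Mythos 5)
Your proof is correct, and it takes a genuinely different route from the paper. You use the pivot identity $\int_a^b\phi\psi=\int_a^b[\phi(t)-\phi(t_0)]\psi(t)\,dt$ (legitimate by (iii)) and then read off that the new integrand is strictly negative on $(a,t_0)\cup(t_0,b)$, since $\phi(t)-\phi(t_0)$ and $\psi(t)$ have opposite signs on each side of $t_0$; this is the classical Chebyshev-type argument. The paper instead balances the negative mass on $(t_1,t_0)$ against an equal positive mass on $(t_0,t_2)$, applies the first mean value theorem for integrals on each piece to produce points $t^*<t_0<t^{**}$ with $\int_{t_1}^{t_0}\phi\psi+\int_{t_0}^{t_2}\phi\psi=[\phi(t^{**})-\phi(t^*)]A(t_1)<0$, and then lets $t_1\to a^+$. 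Your version is more elementary (no mean value theorem, no moving intermediate points) and, importantly, it delivers the strict inequality more robustly: in the paper's limiting step one must still argue that a limit of negative quantities stays negative, whereas in your decomposition each of the two integrals is a priori strictly negative. You also handle the convergence at infinite endpoints more explicitly than the paper does, correctly noting that the single sign change plus existence of the improper integral forces absolute integrability of $\psi$. Both arguments share the same tacit reading of hypothesis (i) as \emph{strict} monotonicity of $\phi$ (the paper needs $\phi(t^*)>\phi(t^{**})$, you need $\phi(t)\neq\phi(t_0)$ for $t\neq t_0$); without strictness the conclusion fails for constant $\phi$, so your parenthetical ``(strictly)'' matches the intended reading.
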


\begin{proof}
Conditions (ii) and (iii) imply that $\int_{t_0}^b \psi(t)dt=- \int_a^{t_0} \psi(t)dt>0. $  Therefore for any $t_1\in(a,t_0)$ there is a unique $t_2\in(t_0,b)$ such that
\[
0< \int_{t_0}^{t_2} \psi(t)dt=- \int_{t_1}^{t_0} \psi(t)dt =: A(t_1)
\]
and $t_2\to b^-$ as $t_1\to a^+$. By the integral average theorem, there are points $t^*\in(t_1,t_0)$ and $t^{**}\in(t_0,t_2)$ such that
\begin{eqnarray*}
 \int_{t_1}^{t_0} \phi(t)\psi(t) dt  &= & \phi(t^*) \int_{t_1}^{t_0} \psi(t) dt =  - \phi(t^*)  A(t_1),             \\
 \int_{t_0}^{t_2} \phi(t) \psi(t) dt  &=& \phi(t^{**})\int_{t_0}^{t_2} \psi(t) dt =  \phi(t^{**})  A(t_1).
\end{eqnarray*}

Thus
\begin{eqnarray*}
  \int_a^b\phi(t)\psi(t)dt &=&  \lim_{t_1\to a^+} \left[ \int_{t_1}^{t_0} \phi(t) \psi(t) dt +  \int_{t_0}^{t_2} \phi(t) \psi(t) dt \right]   \\
   &=&   \lim_{t_1\to a^+} \left[  - \phi(t^*)  A(t_1) +  \phi(t^{**})  A(t_1) \right]   \\
   &=&  \lim_{t_1\to a^+} \left[  - \phi(t^*)   +  \phi(t^{**})  \right] A(t_1) <0
\end{eqnarray*}
because $t^*<t_0<t^{**}$ and thanks to condition (i).
 \end{proof}
 
 Choosing in this lemma $\phi(t)=e^{-st}$, we conclude the following:
\begin{corollary}\label{corr-laplace}
  Let function $\psi\in C(0,\infty)$, $\psi(t)<0$ as $t\in(0,t_0)$  for some   $t_0\in(0,\infty)$, $\psi(t)>0$ as $t\in(t_0,\infty)$, and  $\displaystyle\int_0^\infty \psi(t)dt=0$.
  Then the Laplace transform $\mathcal{L}[\psi](s)$ is negative in $s>0$. 
 \end{corollary}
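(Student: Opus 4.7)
The plan is to deduce the corollary as a direct specialization of Lemma \ref{lem-integral}, taking $a=0$, $b=\infty$, and $\phi(t)=e^{-st}$ for an arbitrary fixed $s>0$. Once the hypotheses are verified, the conclusion $\int_0^\infty e^{-st}\psi(t)\,dt<0$ is exactly the negativity of $\mathcal{L}[\psi](s)$.

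First, I would check condition (i) of the lemma: for any $s>0$, the function $\phi(t)=e^{-st}$ is continuous on $(0,\infty)$, strictly positive, bounded above by $1$, and strictly decreasing. Conditions (ii) and (iii) on the sign change and vanishing improper integral of $\psi$ are, respectively, the sign hypothesis and the integral hypothesis stated in the corollary, so they transfer without modification.

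Next, I would quickly justify that the Laplace transform is well-defined. Because $\psi$ has constant sign on each of $(0,t_0)$ and $(t_0,\infty)$ and the improper integral $\int_0^\infty \psi(t)\,dt$ converges to $0$, both $\int_0^{t_0}|\psi(t)|\,dt$ and $\int_{t_0}^\infty |\psi(t)|\,dt$ must be finite, so $\psi\in L^1(0,\infty)$. Since $|e^{-st}\psi(t)|\le|\psi(t)|$ for every $s>0$, the integrand $e^{-st}\psi(t)$ is absolutely integrable and $\mathcal{L}[\psi](s)$ is well-defined.

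Invoking Lemma \ref{lem-integral} with the above choices then yields $\mathcal{L}[\psi](s)=\int_0^\infty \phi(t)\psi(t)\,dt<0$, as required. There is no genuine obstacle in this argument: the lemma was designed precisely to produce statements of this kind, and the role of the exponential weight $e^{-st}$ is only to supply a positive, bounded, decreasing $\phi$ for each $s>0$. The only mildly delicate point is confirming integrability from the hypothesis that the improper integral of $\psi$ vanishes, which is immediate once one exploits the fact that $\psi$ changes sign at a single point.
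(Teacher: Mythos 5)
Your proposal is correct and follows exactly the paper's route: the corollary is stated there as an immediate consequence of Lemma~\ref{lem-integral} with $\phi(t)=e^{-st}$, which is precisely your specialization. Your additional verification that $\psi\in L^1(0,\infty)$ (hence that the Laplace transform is well-defined) is a reasonable extra detail the paper leaves implicit.
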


\hspace{2mm}



We now turn to the Gau{\ss} hypergeometric function $\Gauss(a,b;c;\cdot)$. Here $a,b,c\in\C$ are parameters that satisfy $0<\Re b<\Re c$. Recall that this function is defined for $z\in\D$ by  
\begin{equation}\label{gauss}
\Gauss(a,b;c;z)= 1+ \sum_{n=1}^\infty\frac{(a)_n(b)_n}{(c)_nn!}z^n=\frac{\Gamma(c)}{\Gamma(b)\Gamma(c-b)} \int_0^1\frac{x^{b-1}(1-x)^{c-b-1}}{(1-zx)^a}dx,
\end{equation}
where $(\alpha)_n=\frac{\Gamma(\al+n)}{\Gamma(\al)}=\al\cdot(\al+1)\cdot\ldots\cdot(\al+n-1)$ is the Pochhammer symbol.
For geometric properties of $\Gauss(a,b;c;z)$, we refer to the useful papers \cite{BPV02, SW17, W22} and the references therein. If $c=a+b$, the hypergeometric function $\Gauss(a,b;a+b;z)$  is called {\it zero-balanced}.

We now consider the following functions:

\begin{equation}\label{xi_0}
\xi_0(s):=2 \Gauss(1,s;s+1;-1)-1 =\int_0^1\frac{1-x}{1+x}\,sx^{s-1} dx
\end{equation}
and
\begin{equation}\label{all--xi}
\xi_1(s):=\frac{1-\xi_0(s)}{2s}\, , \quad \xi_2(s):=2s\xi_0(s) ,  \quad \xi_3(s):=   \frac{1-\xi_0(s)}{2s\xi_0(s)}\,, \quad s>0     .
\end{equation}

\begin{theorem}\label{lem-kappa}
The functions $\xi_0,\,\xi_1,\,\xi_2$ and $\xi_3$ are continuous on $(0,\infty).$ Moreover, 
\begin{itemize}
  \item [(i)] function $\xi_0$ is decreasing and maps $(0,\infty)$ onto $(0,1)$ and such that the function $s\mapsto s^2\xi_0'(s)$ is decreasing;
  \item [(ii)] function $\xi_1$ is decreasing  and maps $(0,\infty)$ onto $(0,\ln2)$;
  \item [(iii)]function $\xi_2$ is increasing and maps $(0,\infty)$ onto $(0, 1)$;
\item [(iv)] function $\xi_3$ is increasing and maps $(0,\infty)$ onto $(\ln2,1)$.
\end{itemize}
Thus, since these functions are monotone, they can be extended to $[0,\infty)$ and even be defined by continuity at $\infty$.
\end{theorem}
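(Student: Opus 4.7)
The plan is to replace each $\xi_i$ by an integral representation from which the monotonicity in $s$ and the endpoint values can be read off directly. Integration by parts in \eqref{xi_0} with $u=(1-x)/(1+x)$ and $dv=sx^{s-1}\,dx$ gives
\[
\xi_0(s)=2\int_0^1 \frac{x^s}{(1+x)^2}\,dx;
\]
together with the identity $1=2\int_0^1(1+x)^{-2}\,dx$ and the substitution $x=e^{-t}$ one obtains
\[
\xi_1(s)=\int_0^1\frac{x^s}{1+x}\,dx=\int_0^\infty\frac{e^{-st}}{1+e^t}\,dt,
\]
and the change of variable $u=-s\ln x$ in $\xi_2=2s\xi_0$ produces
\[
\xi_2(s)=\int_0^\infty\frac{e^{-u}}{\cosh^2(u/(2s))}\,du.
\]

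With these representations in hand, parts (i)--(iii) are essentially immediate. For $0<s<s'$ the pointwise inequality $x^s>x^{s'}$ on $(0,1)$ forces both $\xi_0$ and $\xi_1$ to be strictly decreasing, while $\cosh^{-2}(u/(2s))<\cosh^{-2}(u/(2s'))$ forces $\xi_2$ to be strictly increasing. The endpoint values $\xi_0(0^+)=1$, $\xi_1(0^+)=\ln 2$, $\xi_2(\infty)=1$ come straight from the integrals, while $\xi_0(\infty)=\xi_1(\infty)=\xi_2(0^+)=0$ follow by dominated convergence; this pins down the stated ranges. For the remaining clause of (i), the same substitution $u=-s\ln x$ converts $\xi_0'(s)=2\int_0^1 x^s\ln x\,(1+x)^{-2}\,dx$ into
\[
s^2\xi_0'(s)=-\int_0^\infty\frac{ue^{-u}}{2\cosh^2(u/(2s))}\,du,
\]
and the integrand increases with $s$ (the $\cosh$ factor decreases toward $1$), so $s\mapsto s^2\xi_0'(s)$ is decreasing.

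Part (iv) is the only real obstacle. I would rewrite $\xi_3=\xi_1/\xi_0$ using the above integral formulas as $\xi_3(s)=\tfrac12(1+\rho(s))$, where, with $A(s):=\int_0^1 x^s(1+x)^{-2}\,dx$,
\[
\rho(s):=\frac{\int_0^1 x^{s+1}(1+x)^{-2}\,dx}{A(s)}=E_{\nu_s}[X]
\]
is the expectation of the identity function under the probability measure $d\nu_s(x)=A(s)^{-1}x^s(1+x)^{-2}\,dx$ on $(0,1)$. Direct differentiation identifies $\rho'(s)$ with the covariance $\mathrm{Cov}_{\nu_s}(X,\ln X)$; since both $x$ and $\ln x$ are strictly increasing on $(0,1)$, this covariance is strictly positive by Chebyshev's correlation inequality, so $\rho$ and hence $\xi_3$ is strictly increasing. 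The explicit evaluations $\int_0^1(1+x)^{-2}\,dx=\tfrac12$ and $\int_0^1 x(1+x)^{-2}\,dx=\ln 2-\tfrac12$ give $\rho(0^+)=2\ln 2-1$, hence $\xi_3(0^+)=\ln 2$, while the concentration of $\nu_s$ at $x=1$ as $s\to\infty$ forces $\rho(s)\to 1$ and so $\xi_3(s)\to 1$.
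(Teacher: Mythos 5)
Your argument is correct, and for the two nontrivial parts it takes a genuinely different — and substantially more elementary — route than the paper. For (iii), the paper keeps the representation $\xi_0(s)=\int_0^1\frac{1-x}{1+x}sx^{s-1}dx$ and deduces $(\log\xi_2)'>0$ indirectly, via Cauchy's mean value theorem applied to $\xi_0(s)$ and $1/s$ together with the monotonicity of $s^2\xi_0'(s)$; your rescaled representation $\xi_2(s)=\int_0^\infty e^{-u}\cosh^{-2}(u/(2s))\,du$ makes the integrand pointwise increasing in $s$ and settles (iii) and the last clause of (i) in one stroke. The real divergence is in (iv): the paper reduces $\xi_3'>0$ to the negativity of $g(s)=(1-\xi_0(s))\xi_0(s)+s\xi_0'(s)$, rewrites $g/(2s)$ as a Laplace transform of an explicit convolution, and then needs Lemma 2.1 — that $\psi_1(x)=\psi_2(x)$ has a unique positive root — whose proof occupies the Appendix and rests on a Maple computation of a logarithmic residue. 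Your identity $\xi_3=\frac12(1+\rho)$ with $\rho(s)=E_{\nu_s}[X]$ for the probability measures $d\nu_s(x)=A(s)^{-1}x^s(1+x)^{-2}dx$, combined with $\rho'(s)=\mathrm{Cov}_{\nu_s}(X,\ln X)>0$ via the Chebyshev correlation inequality (both $x$ and $\ln x$ are strictly increasing and $\nu_s$ is not a point mass), bypasses the technical lemma and the computer-assisted step entirely, and the endpoint computations $\rho(0^+)=2\ln2-1$ and $\rho(\infty)=1$ recover the sharp range $(\ln2,1)$. The only things you leave implicit — continuity, differentiation under the integral sign near $x=0$ where $\ln x$ blows up, and the dominated-convergence limits — are routine for $s>0$ and worth a sentence each, but they are not gaps. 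Note, however, that your shortcut does not supersede the paper's Lemma 2.1 wholesale: that lemma is stated as a free-standing result, and your argument only replaces its use in proving (iv).
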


\begin{proof}
 Since $\displaystyle \xi_0'(s)= \int_0^1\frac{1-x}{1+x}\cdot\frac{\partial}{\partial x}\left(x^s\ln x\right) dx=\int_0^1\frac{2x^s \ln x}{(1+x)^2}\, dx<0$,  function $\xi_0$ is decreasing.   In addition, ${\displaystyle (s^2\xi_0'(s))'=-2\int\limits_0^1 \frac{ 1-x } {(1+ x)^3}x^s\ln^2 x\,dx<0}$, so, statement (i) follows.
 
  Further, note that $\displaystyle  \xi_1(s)= \int_0^1 \frac{x^s}{1+x}\,dx,$  which implies statement (ii).

As for function $\xi_2$, fix arbitrary $s_2>s_1>0$. According to Cauchy's mean value theorem applied to the functions $\xi_0(s),1/s\in C[s_1,s_2]$, there is $\tilde{s}\in(s_1,s_2)$ such that 
$$\displaystyle\frac{\xi_0'(\tilde s)}{-1/\tilde s^2}= \frac {\xi_0(s_2) -\xi_0(s_1)} {1/s_2-1/s_1}.$$ 
Since the function  $s^2\xi_0'(s)$ is decreasing, $ s_1^2\xi'_0(s_1) > \tilde s^2\xi_0'(\tilde s) =-  \frac {\xi_0(s_2) -\xi_0(s_1)} {1/s_2-1/s_1}$. Letting ${s_2\to\infty}$, we conclude that  $ s_1 \xi'_0(s_1)  > -  \xi_0(s_1)$. Because the point $s_1$ is arbitrary, one has $\frac{\xi_0'(s)}{\xi_0(s)} +\frac1s > 0$, or, which is the same, $\left(\log\xi_2(s) \right)'  > 0$. Thus statement (iii) is proved. 

To prove statement (iv), one has show $\xi_3'(s) > 0$. This inequality is equivalent to
\begin{equation}\label{g}
 g(s) <0,\quad\mbox{where}\quad g(s):=(1- \xi_0(s) )\xi_0(s) +s\xi_0'(s)       
\end{equation}

Return to the integral in \eqref{xi_0} defining the function $\xi_0$ and substitute there $x=e^{-t}$:
\[
\xi_0(s)=\int_0^\infty \frac{1-e^{-t}}{1+e^{-t}}\,se^{-ts}dt =s\mathcal{L} \left[ \frac{1-e^{-t}}{1+e^{-t}} \right]\!(s) =  \mathcal{L}\left[  \frac{2e^{-t}}{(1+e^{-t})^2} \right]\!(s), 
\]
where $\mathcal{L}$ is the Laplace transform. Similarly,
\[
1-\xi_0(s)= \int_0^\infty \frac{2e^{-t}}{1+e^{-t}}\,se^{-ts}dt=s \mathcal{L}\left[  \frac{2e^{-t}}{1+e^{-t}} \right]\!(s)
\]
and
\[
\xi'_0(s)=  - \mathcal{L}\left[  \frac{2te^{-t}}{(1+e^{-t})^2} \right]\!(s) .
\]


Thus $g$ takes the form
\begin{eqnarray*}
  g(s)&=& \mathcal{L}\left[  \frac{2e^{-t}}{(1+e^{-t})^2} \right]\!(s) \cdot s \mathcal{L}\left[  \frac{2e^{-t}}{1+e^{-t}} \right]\!(s)   -s \mathcal{L}\left[  \frac{2te^{-t}}{(1+e^{-t})^2} \right]\!(s) \\
     &=& 2s \mathcal{L} \left[  \frac{2e^{-t}}{(1+e^{-t})^2} * \frac{e^{-t}}{1+e^{-t}} -  \frac{te^{-t}}{(1+e^{-t})^2}   \right]\! (s).
\end{eqnarray*}
In order to calculate the convolution, we first find the primitive function:
\[
\int \frac{2e^{-x}}{(1+e^{-x})^2}\cdot \frac{e^{x-t}}{1+e^{x-t}}\,dx =
 \frac{2e^t \log(e^x+1)}{(e^t-1)^2} + \frac{2}{(e^t-1)(e^x+1)} - \frac{2e^t \log(e^t+e^x)}{(e^t-1)^2}+C.
\]
Thus
\begin{eqnarray*}
   \frac{2e^{-t}}{(1+e^{-t})^2} * \frac{e^{-t}}{1+e^{-t}}&=&  \frac{4e^t \log(e^t+1)}{(e^t-1)^2} -
   \frac{4e^t \log2}{(e^t-1)^2} - \frac{2te^t }{(e^t-1)^2} - \frac{1}{e^t+1} 
\end{eqnarray*}
and 
\begin{eqnarray*}
  \frac{g(s)}{2s} &=& \mathcal{L} \left[ \frac{4e^t \log(e^t+1)}{(e^t-1)^2} - \frac{4e^t \log2}{(e^t-1)^2} - \frac{2te^t }{(e^t-1)^2} - \frac{1}{e^t+1} -  \frac{te^{-t}}{(1+e^{-t})^2}    \right] (s)  \\
   &= & \mathcal{L} \left[  \frac{2e^t}{(e^t-1)^2}\left(2\log\frac{1+e^t}2-t\right) - \frac{1+e^t+te^t}{(e^t+1)^2}    \right] (s).
\end{eqnarray*}

To understand the behavior of this expression, consider functions $\psi_1$ and $\psi_2$ defined in Lemma~\ref{lem-uniq-root}. This leads us to the relation
\[
  \frac{g(s)}{2s} = \mathcal{L} \left[  \psi_1(e^t-1) - \psi_2(e^t-1)  \right] (s).
\]
Lemma~\ref{lem-uniq-root} states that the pre-image $\mathcal{L}^{-1}\!\!\left[  \frac{g(s)}{2s} \right]$ has a unique root for $t>0$. Then $\frac{g(s)}{2s} < 0$ by Corollary~\ref{corr-laplace}. So, inequality \eqref{g} holds, which completes the proof.
\end{proof}

It is worth mentioning that Theorem~\ref{lem-kappa}, in fact, presents certain properties of the values of the Gau{\ss} hypergeometric function at $z=-1$ because  functions $\xi_j$ can be expressed by it.

\begin{corollary}
Denote $F(s)= \Gauss(1,s;s+1;-1)$. The functions $ F(s)$ and $\frac{1- F(s)}{s}$ are decreasing while $s\!\left( F(s)-\frac12\right)$ and $\frac{1- F(s)}{s\left( 2F(s)-1\right)}$  are increasing on $(0,\infty)$. Moreover, the following sharp estimates hold:
\[
\frac12< F(s)<1,\  \    0<\frac{1- F(s)}{s}<\ln 2,\  \   0< s\!\left(\!\! F(s)-\frac12\!\right)\!\! <\frac14, \ \    \ln 2<  \frac{1- F(s)}{s\left( 2F(s)-1\right)}<1.
\] 
\end{corollary}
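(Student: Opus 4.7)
The plan is that this corollary is essentially a rewriting of Theorem~\ref{lem-kappa}, so the only real work is to verify that each of the four expressions in the statement is a trivial algebraic rearrangement of one of the functions $\xi_0,\xi_1,\xi_2,\xi_3$. Starting from the identity $\xi_0(s)=2F(s)-1$, which is literally the first equality in \eqref{xi_0}, I would record the rewritings
\[
F(s)=\tfrac{1+\xi_0(s)}{2},\quad \tfrac{1-F(s)}{s}=\xi_1(s),\quad s\bigl(F(s)-\tfrac12\bigr)=\tfrac14\xi_2(s),\quad \tfrac{1-F(s)}{s(2F(s)-1)}=\xi_3(s),
\]
each of which is immediate from the definitions in \eqref{all--xi}.

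Next I would transfer monotonicity: since $F(s)$ is an affine increasing function of $\xi_0(s)$, item~(i) of Theorem~\ref{lem-kappa} yields that $F$ is decreasing; the equality $\frac{1-F(s)}{s}=\xi_1(s)$ combined with item~(ii) gives the second monotonicity; $s(F(s)-1/2)$ is a positive multiple of $\xi_2(s)$, so (iii) gives the third; and $\frac{1-F(s)}{s(2F(s)-1)}=\xi_3(s)$ together with (iv) gives the last. The sharp two-sided estimates then come directly from the ``onto'' assertions in (i)--(iv): the image $\xi_0((0,\infty))=(0,1)$ translates through $F=(1+\xi_0)/2$ to $F((0,\infty))=(1/2,1)$; the ranges $(0,\ln 2)$, $(0,1/4)$ and $(\ln 2,1)$ follow in the same way, the factor $1/4$ appearing because $s(F(s)-1/2)=\xi_2(s)/4$ and $\xi_2$ takes values in $(0,1)$.

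There is essentially no obstacle at this stage: once Theorem~\ref{lem-kappa} is in hand, the corollary is a bookkeeping exercise. Its conceptual role is to repackage the four pieces of information about $\xi_0,\xi_1,\xi_2,\xi_3$ into one statement concerning the single hypergeometric value $F(s)=\Gauss(1,s;s+1;-1)$, making explicit what the preceding theorem says about the Gau\ss\ hypergeometric function itself and advertising the monotonicity and sharp bounds in the form most directly useful for the applications in the later sections.
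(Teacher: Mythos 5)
Your proposal is correct and matches the paper's (implicit) argument: the corollary is stated without proof as an immediate consequence of Theorem~\ref{lem-kappa}, via exactly the identifications $\xi_0=2F-1$, $\xi_1=\frac{1-F}{s}$, $\xi_2=4s(F-\frac12)$ and $\xi_3=\frac{1-F}{s(2F-1)}$ that you record. All four rewritings and the resulting transfers of monotonicity and range are verified correctly, including the factor $\frac14$ for the third estimate.
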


\bigskip

\section{A two-parameter family and inclusion property}\label{sect-2-param}

\setcounter{equation}{0}

 Denote by $\mathcal{A}$ the set of all holomorphic functions in the open unit disk $\D$ normalized by $f(0)=f'(0)-1=0$. Let $\Omega=\{(s,t):\, s\in[0,\infty),\,t\in[0,1)\}$.  
From now on we are dealing with the two-parameter family $\Ao$ consisting of the sets
\begin{equation}\label{2-param}
\Ao_s^t:=\left\{f\in\mathcal{A}:\ \Re \left[(s-1) \frac{f(z)}{z} + f'(z) \right]\geq st, \  z\in \D\setminus \{0\}\right\},  \quad (s,t)\in\overline\Omega,
\end{equation}
and $\Ao_\infty^t:= \left\{f\in\mathcal{A}:\ \Re \left[ \frac{f(z)}{z}  \right]\geq t, \  z\in \D\setminus \{0\}\right\}$.

These classes were introduced in~\cite{K-R}, where an integral transform between different sets $\Ao_s^t$ was established. The sets $\Ao_1^t$ were studied even earlier in~\cite{Hal}. 
Subsequently, in \cite{E-J-survey} we considered these classes with a different parametrization and found certain functions  $t=t(s)$ for which the sets $\Ao_s^{t(s)}$ form filtrations. 

\vspace{2mm}

The following facts are evident.
\begin{lemma}\label{lem-starting}
For each $(s,t)\in \overline{\Omega}$, the set $\Ao_s^t$ is a convex body. Moreover,
\begin{itemize}
 \item[(a)] $\Ao_0^t=\Ao_s^1=\{\Id\}$; \item[(b)] $f\in\Ao_\infty^t \iff \frac{f(z)-tz}{(1-t)z}\in\mathcal{C}$; \item[(c)] if $\ 0\le t_1<t_2\le1$, then $\Ao_s^{t_1}\supset \Ao_s^{t_2}$; \item[(d)] if $f(z)=zp(z)$, then $f\in\Ao_s^t\Leftrightarrow \Re \left[ s p(z) + zp'(z) \right]\ge st, \  z\in \D$.
     \end{itemize}
\end{lemma}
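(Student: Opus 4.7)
The plan is to treat the four clauses in turn; all of them follow directly from unwinding the definition \eqref{2-param}. The convexity statement is the easiest: for each fixed $z\in\D\setminus\{0\}$, the functional $f\mapsto \Re\!\left[(s-1)f(z)/z+f'(z)\right]$ is $\R$-affine on the affine subspace $\mathcal{A}\subset\Hol$, so $\Ao_s^t$ is an intersection of closed half-spaces, hence closed and convex. To see it is a ``body'' (has nonempty interior) whenever $(s,t)$ is not one of the degenerate values covered by (a), I would exhibit $\Id$ as an interior point: substituting $f=\Id$ into the defining functional yields the constant value $s>st$ strictly for $s>0$, $t<1$, and the functional depends continuously on $f$ in the topology of local uniform convergence.

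For clause (a), I would invoke the minimum principle for harmonic functions. In the case $s=0$, the defining inequality reads $\Re h(z)\ge 0$, where $h(z):=f'(z)-f(z)/z$ is holomorphic in $\D$ with $h(0)=f'(0)-f'(0)=0$; since $\Re h$ attains its minimum at an interior point, $\Re h\equiv 0$, hence $h\equiv 0$, so $zf'(z)=f(z)$, forcing $f=\Id$ by the normalization. In the case $t=1$, set $g(z):=(s-1)f(z)/z+f'(z)$; then $g(0)=s$ and $\Re g\ge s$, so by the same argument $g\equiv s$. Equating Taylor coefficients (equivalently, solving the resulting first-order linear ODE with $f(0)=0$, $f'(0)=1$) yields $f=\Id$.

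For clause (b), a direct computation shows that $p(z):=(f(z)-tz)/((1-t)z)$ is holomorphic in $\D$ with $p(0)=(f'(0)-t)/(1-t)=1$, and $\Re p(z)=(\Re[f(z)/z]-t)/(1-t)$, which is nonnegative iff $\Re[f(z)/z]\ge t$, i.e., iff $f\in\Ao_\infty^t$. Clause (c) is immediate: for $s>0$ the inequality $\ge st_2$ is strictly stronger than $\ge st_1$, and for $s=0$ both sets equal $\{\Id\}$ by (a). Finally, clause (d) is obtained by the substitution $f(z)=zp(z)$, giving $f'(z)=p(z)+zp'(z)$ and therefore $(s-1)f(z)/z+f'(z)=sp(z)+zp'(z)$. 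I do not anticipate a substantive obstacle; the statement is presented as ``evident,'' and the only place requiring a moment's care is the boundary case $t=1$, where one must recognize that the defining inequality becomes equality at $z=0$ and invoke the minimum principle.
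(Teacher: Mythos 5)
The paper states this lemma with the single remark ``The following facts are evident'' and supplies no proof, so there is no argument of the authors to compare against step by step. Your verifications of (a)--(d) are correct and are surely the intended ones: the minimum principle applied to $h(z)=f'(z)-f(z)/z$ (which vanishes at the origin) and to $g(z)-s$ for the two degenerate cases in (a), followed by coefficient comparison; and the direct substitutions for (b), (c), (d). Likewise, convexity and closedness of $\Ao_s^t$ as an intersection of the closed half-spaces $\{f:\Re[(s-1)f(z)/z+f'(z)]\ge st\}$ is exactly right.

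The one step that does not hold up is your justification that $\Ao_s^t$ is a \emph{body} by exhibiting $\Id$ as an interior point in the topology of locally uniform convergence. The defining inequality is imposed at every $z\in\D$, including points arbitrarily close to $\partial\D$, where locally uniform closeness to $\Id$ gives no control. Concretely, $f_N(z)=z+\epsilon z^N$ lies in $\mathcal{A}$ and converges to $\Id$ locally uniformly as $N\to\infty$ for each fixed $\epsilon>0$, yet $(s-1)f_N(z)/z+f_N'(z)=s+\epsilon(s+N-1)z^{N-1}$ has infimum of real part over $\D$ equal to $s-\epsilon(s+N-1)$, which drops below $st$ once $N$ is large; hence $f_N\notin\Ao_s^t$ and $\Id$ is not interior. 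In this literature ``convex body'' is better read as ``compact convex set'': compactness follows from the representation \eqref{fp-sol} used later in the paper, namely that every $f=zp\in\Ao_s^t$ with $s>0$ satisfies $p(z)=t+(1-t)\int_0^1 q(xz)\,s x^{s-1}dx$ with $q$ ranging over the (compact) Carath\'eodory class, which yields local boundedness and hence normality, while closedness under locally uniform limits is clear from your half-space description. With the ``interior point'' claim replaced by this compactness argument (or simply dropped, depending on the authors' intended meaning of ``body''), your write-up is complete.
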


An additional useful property of the classes $\Ao_s^t$  was established in \cite{E-J-survey}: $${\inf_{f\in\Ao_s^t}\inf_{z\in\D}\Re\frac{f(z)}{z}=(1-t)\xi_0(s)+t.}$$

Since our primary focus of investigation  is the family $\Ao$ equipped with inclusion as the inherent partial order, this section is devoted to the subsequent relevant problem: 

\vspace{2mm}
$\bullet$ {\it Given two sets $\Ao_{s_1}^{t_1}$ and $\Ao_{s_2}^{t_2}$ of the family \eqref{2-param}, find conditions that entail or exclude the inclusion of one of them into the other.   }

Since the case $s_1=s_2$ is covered by assertion (c) of Lemma~\ref{lem-starting}, we advance, without loss of generality, assuming that $s_1<s_2$.

\begin{theorem}\label{thm-not_incl}
  Let $0\le s_1<s_2,\ t_1, t_2\in[0,1)$. Then $\Ao_{s_2}^{t_2} \not\subset \Ao_{s_1}^{t_1}$.
\end{theorem}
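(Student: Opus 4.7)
The plan is to exhibit a single function $f_* \in \Ao_{s_2}^{t_2}$ that fails the defining inequality of $\Ao_{s_1}^{t_1}$. I would take $f_*$ to be the extremal whose defining quantity touches the line $\{\Re w = s_2 t_2\}$ all along the unit circle, namely $f_*(z) = zp(z)$ with $p$ the solution of
\[
s_2\,p(z) + z\,p'(z) = s_2 t_2 + s_2(1-t_2)\,\frac{1+z}{1-z}, \qquad p(0) = 1.
\]
Solving this linear ODE (or expanding in a power series) yields
\[
p(z) = (2t_2-1) + 2(1-t_2)\,\Gauss(1, s_2;\, s_2+1;\, z),
\]
so the hypergeometric function studied in Section~\ref{sect-some_functions} enters naturally. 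Because $\Re \frac{1+z}{1-z} > 0$ on $\D$, we have $\Re[s_2 p + zp'] > s_2 t_2$, and Lemma~\ref{lem-starting}(d) gives $f_* \in \Ao_{s_2}^{t_2}$.

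To show $f_* \notin \Ao_{s_1}^{t_1}$, I would exploit the algebraic identity
\[
s_1\,p(z) + zp'(z) = \bigl(s_2\,p(z) + zp'(z)\bigr) - (s_2-s_1)\,p(z)
\]
and examine the boundary behaviour near $z = 1$. On $\partial\D\setminus\{1\}$ we have $\Re\frac{1+z}{1-z} = 0$, so the first bracket has real part equal to the finite constant $s_2 t_2$. On the other hand, $\Gauss(1,s_2;s_2+1;z)$ is zero-balanced and so carries the standard logarithmic singularity $-s_2\log(1-z) + O(1)$ at $z=1$. Consequently
\[
\Re p(e^{i\theta}) \sim -2s_2(1-t_2)\,\log|1-e^{i\theta}| \to +\infty \quad\text{as } \theta \to 0^+.
\]
Since $s_2 - s_1 > 0$, this forces
\[
\Re\bigl[s_1\,p(e^{i\theta}) + e^{i\theta}p'(e^{i\theta})\bigr] = s_2 t_2 - (s_2-s_1)\,\Re p(e^{i\theta}) \to -\infty
\]
as $\theta \to 0^+$. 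For sufficiently small $\theta$ this is strictly below the fixed number $s_1 t_1$, and harmonic continuity propagates the strict inequality to some interior $z_0 \in \D\setminus\{0\}$, contradicting $f_* \in \Ao_{s_1}^{t_1}$.

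The main technical step is the logarithmic blow-up of $\Re p$ at $z = 1$; I would justify it either by the classical asymptotic for zero-balanced $_2F_1$, or directly from the integral representation
\[
p(z) = t_2 + s_2(1-t_2)\int_0^1 x^{s_2-1}\,\frac{1+xz}{1-xz}\,dx,
\]
whose real part along $z=e^{i\theta}$ equals $t_2 + s_2(1-t_2)\int_0^1 x^{s_2-1}\frac{1-x^2}{1-2x\cos\theta + x^2}dx$ and is $\sim -2s_2(1-t_2)\log\theta$ by a short estimate near $x = 1$. The degenerate case $s_1 = 0$ is handled separately using Lemma~\ref{lem-starting}(a): $\Ao_0^{t_1} = \{\Id\}$, whereas the above $f_*$ is non-linear (its second Taylor coefficient is $2s_2(1-t_2)/(s_2+1) > 0$).
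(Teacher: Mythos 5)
Your proof is correct and follows essentially the same route as the paper: the same extremal function $p=2t_2-1+2(1-t_2)\Gauss(1,s_2;s_2+1;\cdot)$, membership in $\Ao_{s_2}^{t_2}$ via the half-plane image of $\frac{1+z}{1-z}$, and the unboundedness of $\Re\bigl[s_1p+zp'\bigr]$ from below near $z=1$ justified by the same boundary integral $\int_0^1 x^{s_2-1}\frac{1-x^2}{1-2x\cos\theta+x^2}\,dx\to\infty$. The only cosmetic difference is that you isolate $p$ (showing $\Re p\to+\infty$ on $\partial\D$ near $1$) where the paper isolates $zp'$ (showing $\Re\,zp'\to-\infty$); since $\Re[s_2p+zp']$ is constant on $\partial\D\setminus\{1\}$, these are the same fact.
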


\begin{proof}
  By Lemma~\ref{lem-starting} (c), $\Ao_{s_1}^{t_1} \subset \Ao_{s_1}^0$. Hence, to prove our result, it suffices to find $f\in \Ao_{s_2}^{t_2}$ such that $f\not\in \Ao_{s_1}^0$ as $s_1<s_2$. 
  
  Let us define the function $p$ as follows
  \begin{eqnarray}\label{p}
  p(z) = 1+2(1-t)\left[\Gauss(1,s_2;s_2+1;z)-1\right] = 1+2(1-t)\sum_{n\ge1}\frac{s_2}{s_2+n}z^n   . 
  \end{eqnarray}
Formula \eqref{p} yields
\begin{equation}\label{p-prime}
p(z) + \frac{1}{s_2} zp'(z)= 1+2(1-t) \frac{z}{1-z}.
\end{equation}
Since the function $w=\frac{z}{1-z}$ maps the open unit disk $\D$ onto the half-plane $\Re w>-\frac12$, we conclude that $\inf _{z\in\D} \Re\left[p(z) + \frac1{s_2} zp'(z) \right] = t$. Thus the function $f$ defined by $f(z)=zp(z)$ belongs to $\Ao_{s_2}^{t_2}$  by Lemma~\ref{lem-starting}~(d). 

To show that $f\not\in \Ao_{s_1}^0$, let us consider the expression
\[
p(z) + \frac1{s_1} zp'(z) = \left(p(z) + \frac1{s_2} zp'(z)\right) +\left(\frac1{s_1}-\frac1{s_2} \right)  zp'(z) .
\]
We already know that the boundary values of $\Re\left(p(z) + \frac1{s_2} zp'(z)\right)$ equals $t$.
Since $s_1$ less than $s_2$ is arbitrary, it is enough to verify that the following claim holds:

{\bf Claim:} $\inf_{z\in\D} \Re \left[ zp'(z) \right]=-\infty.$\footnote{It seems that formula~(B18) in the book \cite{H-K-V} implies $\lim_{z\to1}\Re \left[ zp'(z) \right]=\infty$, which contradicts our claim. In this connection we notice  that the last formula is correct in the non-tangential sense only.}  

Indeed, function $p$ defined by \eqref{p} can be  represented by
\[
p(z)   =2t-1+2(1-t)\int_0^1 \frac{{s_2}x^{{s_2}-1}dx}{1-zx},
\]
see \eqref{gauss}. Combining this with \eqref{p-prime}, one concludes
\begin{eqnarray*}
 {s_2} zp'(z) &=& \left[1+2(1-t) \frac{z}{1-z}\right] -\left[ 2t-1+2(1-t)\int_0^1 \frac{{s_2}x^{{s_2}-1}dx}{1-zx} \right] \\
   &=& 2(1-t) \left[ 1+\frac{z}{1-z} - \int_0^1 \frac{{s_2}x^{{s_2}-1}dx}{1-zx} \right] \\
   &=& 2(1-t) \int_0^1 \left( \frac{1}{1-z} -\frac{1}{1-zx}   \right){s_2}x^{{s_2}-1}dx \\
   &=& 2(1-t) \int_0^1 \frac{z(1-x)}{(1-z)(1-zx)}  \,{s_2}x^{{s_2}-1}dx.
\end{eqnarray*}

Because the  hypergeometric function $\Gauss(1,{s_2};{s_2}+1;z)$ (and hence $p$) can be analytically extended at any boundary point $z\in\partial\D$ excepting $z=1$, we can put in the last formula $z=e^{i\phi},\ \phi\neq0$. In this case we get
\begin{eqnarray*}
 - \frac{{s_2}}{1-t} \Re \left. zp'(z)\right|_{z=e^{i\phi}} &=&- 2\Re \int_0^1 \frac{e^{i\phi}(1-x)}{(1-e^{i\phi})(1- e^{i\phi}x)}  \,{s_2}x^{{s_2}-1}dx  \\
   &=&- 2  \int_0^1 \Re \frac{(e^{i\phi}-1)(1-e^{-i\phi}x)(1-x)} {|1-e^{i\phi}|^2|1- e^{i\phi}x|^2}  \,{s_2}x^{{s_2}-1}dx \\
   &=&  \int_0^1 \frac{1-x^2}{|1-e^{i\phi}x|^2}  \,{s_2}x^{{s_2}-1}dx .
\end{eqnarray*}
Denote $\al_s:=\min\left\{ sx^{s-1}:\ x\in\left[ \frac12,1\right] \right\}$. Using this notation, we have
\begin{eqnarray*}
 - \frac{{s_2}}{1-t} \Re \left. zp'(z)\right|_{z=e^{i\phi}}
   \ge \int_{\frac12}^1 \frac{1-x^2}{|1-e^{i\phi}x|^2}  \,{s_2}x^{{s_2}-1}dx
  \ge \al_{s_2} \int_{\frac12}^1 \frac{1-x^2}  {1+x^2-2x\cos \phi}  \,dx  .
\end{eqnarray*}
Using the elementary calculus tools we get 
$$\int_{\frac12}^1 \frac{1-x^2}  {1+x^2-2x\cos \phi}  \,dx=-\cos \phi \cdot \ln (1-\cos \phi) + A(\phi),$$  where $A(\phi)$ is a bounded function.
Therefore this integral tends to infinity as $\phi\to0$. So, our Claim holds, which completes the proof.
\end{proof}

Thus, due to Theorem~\ref{thm-not_incl}, the inclusion $\Ao_{s_2}^{t_2} \subset \Ao_{s_1}^{t_1}$ is impossible when $s_1<s_2$. We present conditions ensuring the opposite inclusion that involve  function $\xi_0$ defined by~\eqref{xi_0}.

\begin{theorem}\label{lemma-g}
Let $(s_1,t_1)\in\Omega$ and  $s_1<s_2$. 
 \begin{itemize}
  \item [(i)]  If  $t_2=t_1+ (1-t_1)\left(1-\frac{s_1}{s_2}\right)\xi_0(s_1), $  then inclusion $\Ao_{s_1}^{t_1} \subset \Ao_{s_2}^{t_2}$ holds and is sharp in the sense that $\Ao_{s_1}^{t_1} \not\subset \Ao_{s_2}^t$ whenever~$t>t_2$. 
  \item [(ii)]   If  $\Ao_{s_1}^{t_1}\subset\Ao_{s_2}^{t_2}$,  then $t_2\leq t_1+ (1-t_1)\left(1-\frac{s_1}{s_2}\right)\xi_0(s_1).$ Consequently, $(1-t_2)s_2\ge(1-t_1) s_1$.
\item[(iii)]  In addition, if  $s_0 \in [0, s_1)$,  $\,t_0, t_2\in[0,1)$ and the inclusions  $\Ao_{s_0}^{t_0} \subset \Ao_{s_1}^{t_1}\subset \Ao_{s_2}^{t_2}$  hold, then the inclusion $\Ao_{s_0}^{t_0} \subset \Ao_{s_2}^{t_2}$  is not sharp.
\end{itemize}
\end{theorem}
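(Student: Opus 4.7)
The plan is to carry out everything through the substitution $f(z) = zp(z)$ of Lemma~\ref{lem-starting}(d), which turns the defining condition of $\Ao_s^t$ into $\Re[s\,p(z) + zp'(z)] \geq st$ on $\D$. For part (i), the essential identity is
\[
s_2\,p(z) + zp'(z) = \bigl(s_1\,p(z) + zp'(z)\bigr) + (s_2 - s_1)\,p(z),
\]
and I will bound each summand separately assuming $f \in \Ao_{s_1}^{t_1}$: the first summand has real part at least $s_1 t_1$ by definition, and the second at least $(s_2 - s_1)\bigl[(1-t_1)\xi_0(s_1) + t_1\bigr]$ by the infimum formula from \cite{E-J-survey} recalled just before Theorem~\ref{thm-not_incl}. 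Dividing by $s_2$ produces precisely the claimed value of $t_2$. For sharpness, I will reuse the extremal function $p(z) = 1 + 2(1-t_1)[\Gauss(1, s_1; s_1+1; z) - 1]$ already exhibited in the proof of Theorem~\ref{thm-not_incl}: the integral representation \eqref{gauss} gives $p(-1) = t_1 + (1-t_1)\xi_0(s_1)$, and the identity $p(z) + (1/s_1)zp'(z) = t_1 + (1-t_1)(1+z)/(1-z)$ specialised at $z=-1$ yields $p'(-1) = s_1(1-t_1)\xi_0(s_1)$. Substitution then gives $\Re\bigl[s_2\,p(-1) - p'(-1)\bigr] = s_2 t_2$ on the nose, so no larger $t$ is admissible.

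Part (ii) is the logical contrapositive of (i): were $t_2$ to exceed the claimed bound, the extremal function above would witness $\Ao_{s_1}^{t_1} \not\subset \Ao_{s_2}^{t_2}$. To deduce the subsidiary inequality $(1-t_2)s_2 \geq (1-t_1)s_1$, I will rewrite the main bound as $(1-t_2) \geq (1-t_1)\bigl[1 - (1-s_1/s_2)\xi_0(s_1)\bigr]$ and use $\xi_0(s_1) < 1$ from Theorem~\ref{lem-kappa}(i) to conclude that the bracketed factor exceeds $s_1/s_2$.

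For part (iii), I introduce the shorthand $T(t, s, s') := t + (1-t)(1 - s/s')\xi_0(s)$, so that part (ii) reads $t_1 \leq T(t_0, s_0, s_1)$ and $t_2 \leq T(t_1, s_1, s_2)$. Since $T(\cdot, s, s')$ is strictly increasing (its $t$-derivative $1 - (1-s/s')\xi_0(s)$ is positive as $\xi_0 < 1$), monotonic composition gives $t_2 \leq T\bigl(T(t_0, s_0, s_1), s_1, s_2\bigr)$. Non-sharpness of $\Ao_{s_0}^{t_0} \subset \Ao_{s_2}^{t_2}$ means exactly $t_2 < T(t_0, s_0, s_2)$, so the task reduces to the strict inequality $T\bigl(T(t_0, s_0, s_1), s_1, s_2\bigr) < T(t_0, s_0, s_2)$. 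Expanding with $a = 1 - s_0/s_1$, $b = 1 - s_1/s_2$, $u = \xi_0(s_0)$, $v = \xi_0(s_1)$ and using $(1-a)(1-b) = s_0/s_2$, a short algebraic simplification reduces the desired inequality to $s_1\, v(1-u) < s_0\, u(1-v)$, which rearranges precisely as $\xi_3(s_0) < \xi_3(s_1)$. This final inequality is the strict monotonicity of $\xi_3$ established in Theorem~\ref{lem-kappa}(iv).

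The main obstacle lies in part (iii): although the algebraic reduction to $\xi_3(s_0) < \xi_3(s_1)$ is routine, the fact that the chain of inclusions is genuinely non-sharp rests decisively on the strict monotonicity of $\xi_3$, whose proof occupies the bulk of Section~\ref{sect-some_functions} and relies ultimately on the Laplace-transform/convolution analysis of Corollary~\ref{corr-laplace} together with the technical root-uniqueness Lemma~\ref{lem-uniq-root} deferred to the Appendix.
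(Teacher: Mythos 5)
Your proposal is correct and follows essentially the same route as the paper: the same decomposition $s_2p+zp'=(s_1p+zp')+(s_2-s_1)p$ with the lower bound $\Re p\ge t_1+(1-t_1)\xi_0(s_1)$ (which you cite from the survey rather than rederiving via Harnack's inequality, as the paper does), the same extremal function for sharpness (yours is the paper's under $z\mapsto -z$, evaluated at $z=-1$ instead of letting $z\to1^-$), and the same reduction of part (iii) to the strict monotonicity of $\xi_3$ from Theorem~\ref{lem-kappa}(iv). The only cosmetic differences are that you phrase (iii) directly via composition of the map $T$ rather than by contradiction, and you spell out the ``direct calculations'' of (ii) that the paper leaves implicit.
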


\begin{proof}
By \eqref{2-param}, the identity mapping belongs to all classes $\Ao_s^t$.
Let $f\in\Ao_{s_1}^{t_1},\ f\neq\Id$. (So, $s_1\neq0$ by Lemma~\ref{lem-starting} (a).) This function can be represented in the form $f(z)=zp(z)$. It follows from Lemma~\ref{lem-starting} (d) that function $p$ satisfies the inequality 
\begin{equation}\label{in-filt}
\Re\left(s_1p(z)+zp'(z) \right)\ge s_1t_1.
\end{equation}
Therefore, the function $q$ defined by $q(z):=\frac{s_1p(z)+ zp'(z) - s_1t_1}{s_1(1-t_1) }$ satisfies $\Re q(z) \geq 0$ for all $z\in \D$ and $q(0)=1$. Then
\begin{equation*}
s_1p(z)+ zp'(z)=s_1\left(t_1 + (1-t_1)q(z)\right)=:q_1(z).
\end{equation*}
Function $p$ being the solution of this differential equation is
\begin{eqnarray}\label{fp-sol}%
p(z)=\int\limits_0^1 q_1\left(x z\right)x^{s_1-1} dx = t_1  + (1-t_1)\int\limits_0^1 q\left(x z\right) s_1 x^{s_1-1} dx.
\end{eqnarray}
By Harnack's inequality,
\begin{eqnarray*}
\Re p(z)\ge t_1+ (1 - t_1)\int\limits_0^1 \frac{1-x|z|} {1+x|z|}\,s_1 x^{s_1-1} dx .
\end{eqnarray*}
This inequality and \eqref{in-filt} imply
\begin{eqnarray*}
  \Re\left(s_2 p(z)+ zp'(z) \right) &=& \Re\left[ \left( s_2-s_1\right) p(z) + \left(s_1 p(z)+ zp'(z) \right)\right] \\
   &\ge& s_2 \left[ t_1+ (1 - t_1) \left(1-\frac{s_1}{s_2}\right)\int\limits_0^1 \frac{1-x|z|} {1+x|z|}\, s_1 x^{s_1-1} dx \right]\\
   &\ge&    s_2 \left[ t_1+ (1 - t_1) \left(1-\frac{s_1}{s_2}\right)\xi_0(s_1) \right] \!   ,  
\end{eqnarray*}
see \eqref{xi_0}. Thus $f\in\Ao_{s_2}^{t_2}$. To show that this estimate is sharp, let us choose function $q$ in \eqref{fp-sol} to be $q(z)=\frac{1-z}{1+z}$ and,
consequently,
\[
s_2 p(z)+ zp'(z) = s_2 t_1 +(1-t_1) \left[ s_1 \frac{1-z}{1+z} +\left(s_2 -s_1\right) \int_{0}^{1} \frac{1-xz}{1+xz}\,s_1x^{s_1-1} dx \right]\!. 
\]
Setting in this equality $z\to1^-$, we obtain statement (i).

Statement (ii) follows from (i) by direct calculations.

To prove (iii), we note that by statement (ii) the given inclusions imply  
\begin{equation}\label{aux-1}
\begin{array}{l}
 t_1 \le t_0+ (1-t_0)\left(1-\frac{s_0}{s_1}\right)\xi_0(s_0),\\
 t_2\le t_1+ (1-t_1)\left(1-\frac{s_1}{s_2}\right)\xi_0(s_1).
 \end{array}
\end{equation}

Assume by contradiction that the inclusion $\Ao_{s_0}^{t_0} \subset \Ao_{s_2}^{t_2}$  is sharp. Then $t_2$ is equal to $t_0+(1-t_0) \left(1-\frac{s_0}{s_2}\right)\xi_0(s_0)$ by statement (i). Comparing this fact with the second inequality in \eqref{aux-1} gives us
\[
t_0+(1-t_0) \left(1-\frac{s_0}{s_2}\right)\xi_0(s_0) \le t_1+ (1-t_1)\left(1-\frac{s_1}{s_2}\right)\xi_0(s_1).
\]
Note that the coefficient of $t_1$ in the right-hand side is positive. Therefore, one can replace $t_1$ by a larger expression. Taking in mind the first inequality in \eqref{aux-1} and reducing $(1-t_0)$, we~get 
\[
\frac{s_2-s_0}{s_2}\, \xi_0(s_0) \le \frac{s_1-s_0}{s_1}\, \xi_0(s_0) +  \left[1- \frac{s_1-s_0}{s_1} \xi_0(s_0)  \right] \! \cdot \frac{s_2-s_1}{s_2}\, \xi_0(s_1).
\]
This inequality is equivalent to
\begin{eqnarray*}
 \frac{(s_2-s_1)s_0}{s_1s_2}\, \xi_0(s_0)  &\le & \left[1- \frac{s_1-s_0}{s_1}\, \xi_0(s_0)  \right] \! \cdot \frac{s_2-s_1}{s_2}\, \xi_0(s_1), \\
    \frac{s_0}{s_1}\, \xi_0(s_0)  &\le & \left[1- \xi_0(s_0) + \frac{s_0}{s_1} \,\xi_0(s_0) \right] \! \cdot  \xi_0(s_1),  \\
      \frac{1}{s_1\xi_0(s_1)}  &\le &  \frac1{s_0\xi_0(s_0)} - \frac1{s_0} + \frac{1}{s_1}\, ,  \\
\end{eqnarray*}
which coincides with $\displaystyle \frac{1-\xi_0(s_1)}{s_1\xi_0(s_1)} \le \frac{1-\xi_0(s_0)}{s_0\xi_0(s_0)}. $ This contradicts statement (iv) of Theorem~\ref{lem-kappa}. The proof is complete.
\end{proof}

\bigskip

\section{Filtrations and quasi-extrema}\label{sect-filtra}
\setcounter{equation}{0}
 
In this section, we explore the set-theoretic structures  within the family of sets $\Ao_s^t$ defined by equation \eqref{2-param}. To do so, we introduce certain geometric objects tied to the outcomes of the preceding section.

Initially, let us recognize that the first statement (i) in Theorem~\ref{lemma-g} can be interpreted as follows. Given $P_0=(s_0,t_0)\in\Omega$, consider the function $t_{\uparrow, P_0}$ defined by 
\begin{equation}\label{fp-est1}
 t_{\uparrow, P_0}(s):=t_0+ (1-t_0)\left(1-\frac{s_0}{s}\right)\xi_0(s_0), \quad s\ge s_0.
 \end{equation}
 We designate its graph $\Gamma_{\uparrow,P_0}$ as the {\it forward extremal curve for the point $P_0$.}  Every point $P=(s,t)\in\Omega$ lying on or below this graph corresponds to the set $\Ao_s^t$ including $\Ao_{s_0}^{t_0}$, while all other points correspond to sets that do not include $\Ao_{s_0}^{t_0}$. 
In addition, if  $P_1\in \Gamma_{\uparrow,P_0}$, then  $\Gamma_{\uparrow,P_1}$  lies below $\Gamma_{\uparrow,P_0}$ by Theorem~\ref{lemma-g}~(iii).

Similarly, one can defined $\Gamma_{\downarrow,P_0}$, the {\it backward extremal curve for the point $P_0$.} This is the curve such that every point $P=(s,t)\in\Omega$ lying on or above it corresponds to the set $\Ao_s^t$ included in $\Ao_{s_0}^{t_0}$, while all other points correspond to sets not included in $\Ao_{s_0}^{t_0}$. $\Gamma_{\downarrow,P_0}$ is the graph of the implicit function $t_{\downarrow, P_0}$ defined by
\begin{equation}\label{fp-est-back}
 t_0=t_{\downarrow, P_0}(s)+ \left(1-t_{\downarrow, P_0}(s)\right) \left(1-\frac{s}{s_0}\right)\xi_0(s), 
 \end{equation}
 which is obviously well-defined and non-negative for all $s\in[s_*,s_0],$ where $s_*$ is the unique solution to the  equation $\left(1-\frac{s}{s_0}\right)\xi_0(s) =t_0 $. 
 
In this connection the following construction is natural and quite interesting. 
Start from a point $P_0=(s_0,t_0)\in\Omega$ and let $s_1=s_0+\Delta s$. If $\Delta s>0,$ calculate $t_1= t_{\uparrow,P_0}(s_1)$ (otherwise, we are dealing with  $t_{\downarrow,P_0}$). Continue by setting $s_2=s_1+\Delta s$ and $t_2= t_{\uparrow,P_1}(s_2)$. At the next step, let $s_3=s_2+\Delta s$, calculate $t_3$ by \eqref{fp-est1}, and so on. Letting $\Delta s\to0$, we obtain the differential equation $\displaystyle \frac{dt}{1-t}=\frac{\xi_0(s)ds}{s}$ with initial point $(s_0,t_0)$. Its solution is
\begin{equation}\label{fp-extr}
t_{P_0}(s)=1-(1-t_0)\exp\left[-\int_{s_0}^s \frac{\xi_0(\sigma)d\sigma}\sigma \right]\! .
\end{equation}
By construction, the graph $\Gamma_{P_0}$ of the last function has the peculiarity: if $P_1\in \Gamma_{P_0}$, then $\Gamma_{P_1}=\Gamma_{P_0}$. We say that this graph is the {\it curve of infinitesimally sharp inclusions}. 
The following result describes the relationship between the extremal curves and the curve of infinitesimally sharp inclusions.
\begin{theorem}\label{thm-compari}  
  Let $P_0\in\Omega$. Then   the curve of infinitesimally sharp inclusions  $\Gamma_{P_0}$  lies below  the forward  extremal curve $\Gamma_{\uparrow,P_0}$ and above the backward extremal curve $\Gamma_{\downarrow,P_0}$.
 \end{theorem}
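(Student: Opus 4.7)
The plan is to reduce both comparisons to one differential inequality that, via a short computation, boils down to the strict negativity of the function $g(s) = (1-\xi_0(s))\xi_0(s)+s\xi_0'(s)$ already established in the proof of Theorem~\ref{lem-kappa} (see \eqref{g}). The backward comparison will then follow from the forward one by invoking the self-similarity property of $\Gamma_{P_0}$ noted immediately after \eqref{fp-extr}: if $P_1 \in \Gamma_{P_0}$ then $\Gamma_{P_1}=\Gamma_{P_0}$.

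For the forward direction, I would set $h(s) := t_{\uparrow,P_0}(s) - t_{P_0}(s)$ on $[s_0,\infty)$. The formulas \eqref{fp-est1} and \eqref{fp-extr} give $h(s_0)=0$, and it suffices to prove $h'(s) \geq 0$. After differentiating, the condition $h'(s)\geq 0$ rearranges to
\[
\log\bigl(s\xi_0(s)\bigr) - \int_{s_0}^s\frac{\xi_0(\sigma)d\sigma}{\sigma} \leq \log\bigl(s_0\xi_0(s_0)\bigr).
\]
Both sides agree at $s=s_0$, and differentiating the left-hand side produces
\[
\frac{1-\xi_0(s)}{s}+\frac{\xi_0'(s)}{\xi_0(s)} = \frac{g(s)}{s\xi_0(s)},
\]
which is negative on $(0,\infty)$ by \eqref{g}. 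Hence the left-hand side is strictly decreasing in $s$, and the displayed inequality is strict for $s>s_0$; this gives $h(s) > 0$ on $(s_0,\infty)$, so $\Gamma_{P_0}$ lies strictly below $\Gamma_{\uparrow, P_0}$.

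For the backward direction, fix $\bar s$ in the domain of $t_{\downarrow,P_0}$ and set $P^* := (\bar s,\, t_{P_0}(\bar s))$. By the self-similarity recalled above, $\Gamma_{P^*} = \Gamma_{P_0}$ and therefore $t_{P^*}(s_0) = t_0$. Applying the forward inequality just proved, with starting point $P^*$ and evaluated at $s = s_0$, yields
\[
t_0 \leq t_{\uparrow,P^*}(s_0) = t_{P_0}(\bar s) + \bigl(1-t_{P_0}(\bar s)\bigr)\Bigl(1-\frac{\bar s}{s_0}\Bigr)\xi_0(\bar s).
\]
The coefficient $1-(1-\bar s/s_0)\xi_0(\bar s)$ is positive in the admissible range, because there $(1-\bar s/s_0)\xi_0(\bar s) \leq t_0 < 1$ by the definition of the lower endpoint $s_*$ preceding \eqref{fp-est-back}. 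Solving the linear inequality for $t_{P_0}(\bar s)$ and comparing with the explicit expression for $t_{\downarrow,P_0}(\bar s)$ extracted from \eqref{fp-est-back} immediately gives $t_{P_0}(\bar s) \geq t_{\downarrow,P_0}(\bar s)$.

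The only genuinely delicate step is the derivative identity $G'(s)=g(s)/(s\xi_0(s))$ that identifies the right-hand side with the function $g$ from \eqref{g}; this is precisely where the substance of Theorem~\ref{lem-kappa} enters. Everything else is algebraic bookkeeping on the explicit formulas for $t_{\uparrow,P_0}$, $t_{P_0}$, and $t_{\downarrow,P_0}$ together with the self-similarity of $\Gamma_{P_0}$.
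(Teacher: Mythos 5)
Your proof is correct, and it rests on the same analytic input as the paper's: both reduce to the negativity of $g(s)=(1-\xi_0(s))\xi_0(s)+s\xi_0'(s)$, i.e.\ inequality \eqref{g}, which is equivalent to Theorem~\ref{lem-kappa}\,(iv) (monotonicity of $\xi_3$). Your derivative identity checks out: the left-hand side of your displayed inequality has derivative $\frac{1-\xi_0(s)}{s}+\frac{\xi_0'(s)}{\xi_0(s)}=\frac{g(s)}{s\xi_0(s)}<0$. For the forward half the paper is one step more direct: it sets $F(s)=\int_{s_0}^s\frac{\xi_0(\sigma)}{\sigma}\,d\sigma+\log\bigl(1-(1-\tfrac{s_0}{s})\xi_0(s_0)\bigr)$, which is the logarithm of the ratio $\bigl(1-t_{\uparrow,P_0}(s)\bigr)/\bigl(1-t_{P_0}(s)\bigr)$, and shows $F(s_0)=0$, $F'(s)<0$; you differentiate the difference $t_{\uparrow,P_0}-t_{P_0}$ and push the comparison one derivative deeper, arriving at the same fact. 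Where you genuinely diverge is the backward half: the paper notes that the analogous function $G$ turns into $F$ under the swap $s\leftrightarrow s_0$ and reuses the computation, whereas you exploit the self-similarity $P_1\in\Gamma_{P_0}\Rightarrow\Gamma_{P_1}=\Gamma_{P_0}$ and apply the already-proved forward inequality from $P^*=(\bar s,t_{P_0}(\bar s))$ back to $s_0$. Your route is more conceptual and makes transparent why the two halves are really one statement; the paper's is shorter on the page. Your algebra is right: $t_{\downarrow,P_0}(\bar s)=\frac{t_0-c}{1-c}$ with $c=(1-\bar s/s_0)\xi_0(\bar s)$, and $1-c>0$ on $[s_*,s_0]$ for the reason you give.

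One small point to make explicit if you write this up: the auxiliary point $P^*$ need not a priori lie in $\Omega$, since $t_{P_0}(\bar s)\ge 0$ is only known \emph{after} the backward inequality is proved (because $t_{\downarrow,P_0}\ge 0$ on $[s_*,s_0]$). This is harmless — your proof of the forward comparison uses only $1-t_0>0$, never $t_0\ge 0$ — but you should state that the forward inequality holds for any starting ordinate $t_0<1$ before invoking it at $P^*$.
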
                  
 
\begin{proof}
To prove the first statement, compare the formulas \eqref{fp-est1} and \eqref{fp-extr}. We need to show that the inequality
 \[
1 - \exp\left[-\int_{s_0}^s \frac{\xi_0(\sigma)d\sigma}\sigma \right]<\left(1-\frac{s_0}{s}\right)\xi_0(s_0)
  \]
holds for all $s>s_0$. This is equivalent to $F(s)<0$, where
 \[
F(s):=\int_{s_0}^s \frac{\xi_0(\sigma)d\sigma}\sigma +\log\left(1-\xi_0(s_0)+\frac{1}{s}s_0\xi_0(s_0)\right)\!.
  \]
Assertion (iv) of Theorem~\ref{lem-kappa} implies 
\[
F'(s)=(\xi_3(s_0)-\xi_3(s))\cdot (s_0\xi_0(s_0)s\xi_0(s))<0.
\]
Since $F(s_0)=0$, this proves the desired.

Regarding the second assertion, we have $
t_{\downarrow, P_0}(s)=\frac{t_0 - \left(1-\frac{s}{s_0}\right)\xi_0(s)} {1 - \left(1-\frac{s}{s_0}\right)\xi_0(s)}$ by \eqref{fp-est-back}. So, the inequality $t_{\downarrow, P_0}(s)<t_{P_0}(s)$ for $s<s_0$ means that 
 $ \exp\left[-\int_{s_0}^s \frac{\xi_0(\sigma)d\sigma}\sigma \right] < \frac{1} {1 - \left(1-\frac{s}{s_0}\right)\xi_0(s)}$
  which is equivalent to $G(s)<0$, where 
\[
G(s):=- \int_{s_0}^s \frac{\xi_0(\sigma)d\sigma}\sigma + \log\left(1 - \left(1-\frac{s}{s_0}\right)\xi_0(s)\right).
  \]
  Since after the permutation $s_0 \leftrightarrow  s$, this function coincides with the function $F$ applied above, the proof is complete.
\end{proof}

\vspace{1mm}
 
We are at the point where we can address the main problems outlined in this paper.

\vspace{1mm}
Let $T:[s_*,\infty)\to[0,1)$ be a differentiable function.  The first inquiry is:

$\bullet$ {\it  What conditions on $T$ provide that the one-parameter family $\left\{\Ao_s^{T(s)},\ s\ge s*\right\}$ forms a filtration?}

We answer it as follows.

\begin{theorem}\label{thm-filtr}
  Let function $T$ be differentiable on $(s_*,\infty)$. Then $\Ao$ is a filtration if and only if 
  \begin{equation}\label{diff}
   T'(s) \le (1-T(s))\frac{\xi_0(s)}{s},\quad s>s_*.
  \end{equation}
  \end{theorem}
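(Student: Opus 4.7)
The necessity direction is immediate from Theorem~\ref{lemma-g}(ii): under the filtration hypothesis, for any $s_*<s_1<s_2$ we have $T(s_2)\le T(s_1)+(1-T(s_1))\left(1-\tfrac{s_1}{s_2}\right)\xi_0(s_1)$. Dividing by $s_2-s_1$ and noting that $(1-s_1/s_2)/(s_2-s_1)=1/s_2\to 1/s_1$ as $s_2\to s_1^+$ yields exactly \eqref{diff} at $s_1$.

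For sufficiency, fix $s_*<s_1<s_2$. Since $T<1$, the inequality \eqref{diff} may be rewritten as $-\tfrac{d}{ds}\log(1-T(s))\le \xi_0(s)/s$; integrating on $[s_1,s_2]$ gives
\[
T(s_2)\le 1-(1-T(s_1))\exp\!\left(-\int_{s_1}^{s_2}\frac{\xi_0(\sigma)\,d\sigma}{\sigma}\right) = t_{P_1}(s_2),
\]
the value at $s_2$ of the infinitesimally sharp curve \eqref{fp-extr} based at $P_1=(s_1,T(s_1))$. By Theorem~\ref{thm-compari}, $t_{P_1}(s_2)\le t_{\uparrow,P_1}(s_2)$, and so $T(s_2)\le t_{\uparrow,P_1}(s_2)$. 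Two further applications finish the argument: Theorem~\ref{lemma-g}(i) gives $\Ao_{s_1}^{T(s_1)}\subset \Ao_{s_2}^{t_{\uparrow,P_1}(s_2)}$, while Lemma~\ref{lem-starting}(c) (smaller $t$ yields a larger set) turns $T(s_2)\le t_{\uparrow,P_1}(s_2)$ into $\Ao_{s_2}^{t_{\uparrow,P_1}(s_2)}\subset \Ao_{s_2}^{T(s_2)}$. Composing these two inclusions produces $\Ao_{s_1}^{T(s_1)}\subset \Ao_{s_2}^{T(s_2)}$, which is the filtration property.

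The only subtle point is that \eqref{diff} is an infinitesimal comparison with the \emph{infinitesimally sharp} curve $t_{P_1}$, not with the forward extremal curve $t_{\uparrow,P_1}$; thus one cannot obtain $T(s_2)\le t_{\uparrow,P_1}(s_2)$ directly from an ODE comparison and must route through Theorem~\ref{thm-compari}. This is the geometric essence of the statement: the global filtration property is equivalent to $T$ staying below every forward extremal curve based at an earlier point, and by the semigroup-like character of \eqref{fp-extr} this global condition collapses to the single infinitesimal inequality \eqref{diff}.
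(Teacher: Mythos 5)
Your proof is correct and follows essentially the same route as the paper: sufficiency by integrating \eqref{diff} to place $T(s_2)$ below the infinitesimally sharp curve $t_{P_1}(s_2)$, then passing to $t_{\uparrow,P_1}(s_2)$ via Theorem~\ref{thm-compari} and invoking Theorem~\ref{lemma-g}(i) with Lemma~\ref{lem-starting}(c). The only cosmetic difference is in necessity, where you argue directly from Theorem~\ref{lemma-g}(ii) and a difference quotient, whereas the paper argues the contrapositive via the sharpness clause of Theorem~\ref{lemma-g}(i); the two are equivalent.
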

  
  \begin{proof}
   Let $s_0>s_*$ and analyze the function $F(s):=\log(1-T(s))-\log(1-t_{P_0}(s))$ with $P_0=(s_0,T(s_0))$. It follows from \eqref{fp-extr} that inequality \eqref{diff} means that $F'(s) \geq 0$.  Consequently, no part of the graph of $T$ can lie above the curve of infinitesimally sharp inclusions $\Gamma_{P_0}$.   

    Take any $s_1, s_2$  such that $s_*<s_1<s_2$. First assume that inequality \eqref{diff} holds. Then $T(s_2)\le t_{P_1}(s_2),\ P_1=(s_1,T(s_1))$. Therefore,  $\Ao_{s_1}^{T(s_1)}\subset \Ao_{s_2}^{T(s_2)}$ by Theorems~\ref{lemma-g} and~\ref{thm-compari}. Thus, since $s_1, s_2$ are arbitrary, we conclude that $\Ao$ is a filtration. 
    
    Otherwise, assume that $T'(s_1)> (1-T(s_1))\frac{\xi_0(s_1)}{s_1}$ for some $s_1>s_*$. Hence there is $s_2>s_1$ such that for all $s\in[s_1,s_2]$ the inequality $T'(s)> (1-T(s_1))\frac{\xi_0(s_1)}{s_2}$ holds. This implies 
    \[
    \frac{T(s_2)-T(s_1)}{s_2-s_1}> (1-T(s_1))\frac{\xi_0(s_1)}{s_2}\,,
    \]
    or, which is the same, $T(s_2)>T(s_1) + (1-T(s_1))\left( 1-\frac{s_1}{s_2}\right)\xi_0(s_1)=t_{\uparrow, P_1}(s_2)$. Hence $\Ao_{s_1}^{T(s_1)}\not\subset \Ao_{s_2}^{T(s_2)}$  by Theorem~\ref{lemma-g}, that is, $\Ao$ is not a filtration.   
\end{proof}

Now, we shift our attention to the {\it whole} family $\Ao$. 
As this family equipped with the relation $\subset$  constitutes a partially ordered family, our second inquiry is:

$\bullet$ {\it Does $(\Ao,\subset)$ indeed form a lattice?}

As we strive to comprehend this question, we uncover that the answer is negative, showing that the sets of so-called  quasi-suprema and quasi-infima are not singletons.

\begin{definition}\label{def-quasi}
  Given a pair $\Ao_1, \Ao_2\in\Ao$,  we say that 
\begin{itemize}
  \item $\Ao_0\in\Ao$ is a quasi-supremum of this pair and write $\Ao_0\in{\rm qsup}(\Ao_1,\Ao_2)$ if $\Ao_1\cup \Ao_2\subset\Ao_0$ and there is no $\Ao_*\in\Ao$ such that $\Ao_1\cup\Ao_2 \subset\Ao_*\subsetneq\Ao_0$.
  \item $\Ao_0\in\Ao$ is a quasi-infimum of this pair and write $\Ao_0\in{\rm qinf}(\Ao_1,\Ao_2)$ if $\Ao_0\subset\Ao_1\cap \Ao_2$ and there is no $\Ao_*\in\Ao$ such that $\Ao_0\subsetneq\Ao_*\subset\Ao_1\cap\Ao_2$.
\end{itemize}  
\end{definition}

We are now going to describe all quasi-suprema and quasi-infima of pairs of sets $\Ao_s^t$ defined by \eqref{2-param}.

Let $s_1\le s_2$ and the point  $(s_2,t_2)$ lies on or below the forward extremal curve $\Gamma_{\uparrow,P_1}$. Then $\Ao_{s_1}^{t_1} \subset \Ao_{s_2}^{t_2}$, and so $\Ao_{s_1}^{t_1}$ is the infimum as well as $\Ao_{s_2}^{t_2}$ is the supremum of this pair. Therefore we need to focus on the case $s_1<s_2$ and $t_2>t_1+ (1-t_1)\left(1-\frac{s_1}{s_2} \right) \xi_0(s_1)$.

\begin{theorem}\label{thm-supre}
Let $P_1=(s_1,t_1)\in\Omega$ and $P_2=(s_2,t_2)$ lie above $\Gamma_{\uparrow, P_1}$. Then the following assertions hold:
\begin{itemize}
  \item [(a)] the set ${\rm qsup}(\Ao_{s_1}^{t_1},\Ao_{s_2}^{t_2})$ 
consists of $\Ao_s^{\tau_1(s)}$ such that $s\ge s_2$ and  $  \tau_1(s)=\min\left\{ t_{\uparrow, P_1}(s), t_{\uparrow, P_2}(s) \right\}$;
  
  \item [(b)] the set ${\rm qinf}(\Ao_{s_1}^{t_1},\Ao_{s_2}^{t_2})$ 
consists of $\Ao_s^{\tau_2(s)}$ such that $s\le s_1$ and  $  \tau_2(s)=\max\left\{ t_{\downarrow, P_1}(s), t_{\downarrow, P_2}(s) \right\}$. 
\end{itemize}

\end{theorem}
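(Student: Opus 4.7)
The plan is to prove part (a) directly; part (b) follows by a dual argument that replaces the forward extremal curves $\Gamma_{\uparrow,P_i}$ and formula \eqref{fp-est1} with the backward curves $\Gamma_{\downarrow,P_i}$ and formula \eqref{fp-est-back}, reversing the direction of every inclusion. First, I would characterize all upper bounds of the pair $\{\Ao_{s_1}^{t_1}, \Ao_{s_2}^{t_2}\}$. By Theorem~\ref{thm-not_incl}, any upper bound $\Ao_{s'}^{t'}$ must satisfy $s' \ge s_2$, and for such $s'$, Theorem~\ref{lemma-g}(i)--(ii) says that $\Ao_{s_i}^{t_i} \subset \Ao_{s'}^{t'}$ if and only if $t' \le t_{\uparrow,P_i}(s')$ for $i=1,2$. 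Hence the upper bounds are exactly the classes $\Ao_{s'}^{t'}$ with $s' \ge s_2$ and $t' \le \tau_1(s')$.

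Next, I would verify that each candidate $\Ao_s^{\tau_1(s)}$ with $s \ge s_2$ is a quasi-supremum. Suppose $\Ao_{s'}^{t'}$ is an upper bound strictly contained in $\Ao_s^{\tau_1(s)}$. By Theorem~\ref{thm-not_incl}, $s' \le s$. If $s'=s$, the containment forces $t' \ge \tau_1(s)$ while the upper-bound condition forces $t' \le \tau_1(s)$, giving $\Ao_{s'}^{t'} = \Ao_s^{\tau_1(s)}$ and contradicting strict inclusion. If $s' < s$, the inclusion requires $\tau_1(s) \le t_{\uparrow,P'}(s)$ with $P'=(s',t')$, while $t' \le t_{\uparrow,P_i}(s')$ for both $i$. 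Combining the strict monotonicity of $t \mapsto t_{\uparrow,(s',t)}(s)$ (evident from \eqref{fp-est1}, since $(1-s'/s)\xi_0(s')<1$) with the strict non-sharpness from Theorem~\ref{lemma-g}(iii), I would deduce $t_{\uparrow,P'}(s) < t_{\uparrow,P_i}(s)$ for each $i$, hence $t_{\uparrow,P'}(s) < \tau_1(s)$, a contradiction.

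To establish completeness, any quasi-supremum $\Ao_{s_0}^{t_0}$ is in particular an upper bound, so $s_0 \ge s_2$ and $t_0 \le \tau_1(s_0)$; were $t_0 < \tau_1(s_0)$, then $\Ao_{s_0}^{\tau_1(s_0)}$ would be a strictly smaller upper bound by Lemma~\ref{lem-starting}(c), contradicting minimality. Therefore $t_0 = \tau_1(s_0)$, matching the claimed description. The main technical obstacle is the strict inequality $t_{\uparrow,P'}(s) < \tau_1(s)$ in the middle step: a merely non-strict version would still permit a proper upper bound squeezing in below $\Ao_s^{\tau_1(s)}$. Resolving it needs the precise strict form of Theorem~\ref{lemma-g}(iii) (so that chaining two proper inclusions across three distinct $s$-values gives an outer inclusion that is strictly non-sharp) coupled with the strict monotonicity of \eqref{fp-est1} in the $t$-coordinate of $P$, which together handle the subcase $t' < t_{\uparrow,P_i}(s')$ via the intermediate point $P'' = (s',t_{\uparrow,P_i}(s'))$.
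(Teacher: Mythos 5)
Your proof is correct and follows essentially the same route as the paper: characterize the upper (resp.\ lower) bounds via Theorem~\ref{thm-not_incl} and Theorem~\ref{lemma-g}, then use Lemma~\ref{lem-starting}(c) together with the non-sharpness statement of Theorem~\ref{lemma-g}(iii) to show that exactly the boundary classes $\Ao_s^{\tau_1(s)}$ (resp.\ $\Ao_s^{\tau_2(s)}$) are minimal upper bounds (resp.\ maximal lower bounds). Your explicit treatment of the minimality step --- ruling out an upper bound $\Ao_{s'}^{t'}$ with $s_2\le s'<s$ squeezed strictly inside $\Ao_s^{\tau_1(s)}$ by chaining Theorem~\ref{lemma-g}(iii) with the monotonicity of \eqref{fp-est1} in the $t$-coordinate --- spells out a detail the paper dispatches with ``it follows from the above explanation,'' but it is the same underlying argument.
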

\begin{proof}
We prove each one of the assertions by examining all points of $\Omega$. 

We commence with (a). If $s<s_2$ then $\Ao_{s_2}^{t_2}\not\subset\Ao_s^t$ according to Theorem~\ref{thm-not_incl}. 
If $s\ge s_2$ and $t>\tau_1(s)$, then by Theorem~\ref{lemma-g} either $\Ao_{s_1}^{t_1}\not\subset\Ao_s^t$ or $\Ao_{s_2}^{t_2}\not\subset\Ao_s^t$.  So,  $\Ao_s^t\not\in{\rm qsup}(\Ao_{s_1}^{t_1},\Ao_{s_2}^{t_2})$.

If $s\ge s_2$ and $t=\tau_1(s)$, then $\Ao_{s_1}^{t_1}\cup\Ao_{s_2}^{t_2} \subset\Ao_s^{\tau_1(s)}$  by Lemma~\ref{lem-starting} and Theorem~\ref{lemma-g}.
On the other hand, it follows from the above explanation that there is no  $\Ao_*\in\Ao$ such that $\Ao_{s_1}^{t_1}\cup\Ao_{s_2}^{t_2} \subset \Ao_*\subsetneq\Ao_s^t$. Thus $\Ao_s^t$ is a quasi-supremum.

If $s\ge s_2$ and $t<\tau_1(s)$, then $\Ao_{s_1}^{t_1}\cup\Ao_{s_2}^{t_2} \subset\Ao_s^{\tau_1(s)}\subsetneq\Ao_s^t$  by Lemma~\ref{lem-starting} and Theorem~\ref{lemma-g}.    Hence, $\Ao_s^t\not\in{\rm qsup}(\Ao_{s_1}^{t_1},\Ao_{s_2}^{t_2})$. Assertion (a) is proven.

Similarly to the above, if $s>s_1$ then $\Ao_s^t\not\subset\Ao_{s_1}^{t_1}$ according to Theorem~\ref{thm-not_incl}. If $s\le s_1$ and $t<\tau_2(s)$, then either $\Ao_s^t \not\subset\Ao_{s_1}^{t_1}$ or $\Ao_s^t \not\subset\Ao_{s_2}^{t_2}$  by Theorem~\ref{lemma-g}.    So,  $\Ao_s^t\not\in{\rm qinf}(\Ao_{s_1}^{t_1},\Ao_{s_2}^{t_2})$.

If $s\le s_1$ and $t=\tau_2(s)$, then $\Ao_s^{\tau_2(s)}\subset\Ao_{s_1}^{t_1}\cap\Ao_{s_2}^{t_2}$  by Lemma~\ref{lem-starting} and Theorem~\ref{lemma-g}.
In addition, there is no  $\Ao_*\in\Ao$ such that $\Ao_s^t\subsetneq\Ao_*\subset \Ao_{s_1}^{t_1}\cap\Ao_{s_2}^{t_2} $. Thus $\Ao_s^t$ is a quasi-infimum.

If $s\le s_1$ and $t>\tau_2(s)$, then $\Ao_s^t\subsetneq \Ao_s^{\tau_2(s)}\subset \Ao_{s_1}^{t_1} \cap \Ao_{s_2}^{t_2} $  by Lemma~\ref{lem-starting} and Theorem~\ref{lemma-g}.    Hence, $\Ao_s^t\not\in{\rm qinf}(\Ao_{s_1}^{t_1},\Ao_{s_2}^{t_2})$
\end{proof}

Observe that if a pair $\Ao_1, \Ao_2$ has the supremum, then by definition ${\rm sup}(\Ao_1, \Ao_2)\subset {\rm qsup}(\Ao_1, \Ao_2)$. On the other hand, Definition~\ref{def-quasi} implies that the relation ${\rm sup}(\Ao_1, \Ao_2)\subsetneq {\rm qsup}(\Ao_1, \Ao_2)$ is impossible. So, the quasi-supremum coincides with the supremum, in particular, it is unique.
Since not for all pairs $\Ao_{s_1}^{t_1},\Ao_{s_2}^{t_2}$ the sets ${\rm qsup}(\Ao_{s_1}^{t_1},\Ao_{s_2}^{t_2})$ and ${\rm qinf}(\Ao_{s_1}^{t_1},\Ao_{s_2}^{t_2})$ are singletons, we have:

\begin{corollary}
  The family $\Ao:= \left\{ \Ao_s^t:  (s,t)\in\overline\Omega\right\}$ is not a lattice.
\end{corollary}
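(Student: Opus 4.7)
The plan is to exhibit a pair in $\Ao$ whose set of quasi-suprema is not a singleton and then appeal to the observation preceding the corollary: any supremum of a pair, should it exist, must coincide with the unique quasi-supremum. Producing a pair with two distinct quasi-suprema therefore forces this pair to have no supremum, and hence $\Ao$ cannot be a lattice.

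Concretely, I would fix $P_1 = (s_1, t_1) \in \Omega$ and choose $P_2 = (s_2, t_2) \in \Omega$ with $s_1 < s_2$ and $t_2 \in \bigl(t_{\uparrow, P_1}(s_2),\, 1\bigr)$. This interval is nonempty because $t_{\uparrow, P_1}(s_2) < 1$, as is clear from~\eqref{fp-est1} combined with $\xi_0(s_1) \in (0,1)$ from Theorem~\ref{lem-kappa}(i); an explicit choice is $P_1 = (1,0)$ together with any $s_2 > 1$ and $t_2 \in \bigl((1 - 1/s_2)\xi_0(1),\, 1\bigr)$. Then $P_2$ lies strictly above $\Gamma_{\uparrow, P_1}$, so Theorem~\ref{thm-supre}(a) yields
\[
{\rm qsup}(\Ao_{s_1}^{t_1}, \Ao_{s_2}^{t_2}) = \bigl\{\Ao_s^{\tau_1(s)} : s \ge s_2\bigr\},\quad \tau_1(s) = \min\{t_{\uparrow, P_1}(s),\, t_{\uparrow, P_2}(s)\}.
\]

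The remaining substantive step is to verify that this family contains at least two distinct sets. For any $s_2 \le s < s'$, both $\tau_1(s)$ and $\tau_1(s')$ lie in $[0,1)$, so Theorem~\ref{thm-not_incl} gives $\Ao_{s'}^{\tau_1(s')} \not\subset \Ao_s^{\tau_1(s)}$; in particular $\Ao_s^{\tau_1(s)} \ne \Ao_{s'}^{\tau_1(s')}$, and the quasi-supremum set is not a singleton. Combined with the observation recalled above, this completes the argument. I do not foresee any genuine obstacle: Theorems~\ref{thm-not_incl} and~\ref{thm-supre} do all the heavy lifting, and the only small point requiring care is the distinctness of sets along the curve $s \mapsto \Ao_s^{\tau_1(s)}$, which is a direct citation of Theorem~\ref{thm-not_incl}.
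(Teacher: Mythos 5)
Your proposal is correct and follows essentially the same route as the paper: the corollary is deduced from the observation that an existing supremum forces the quasi-supremum set to be a singleton, combined with Theorem~\ref{thm-supre}(a) producing a non-singleton set ${\rm qsup}(\Ao_{s_1}^{t_1},\Ao_{s_2}^{t_2})$ for a pair with $P_2$ above $\Gamma_{\uparrow,P_1}$. Your only addition is to make explicit, via Theorem~\ref{thm-not_incl}, that the sets $\Ao_s^{\tau_1(s)}$ for distinct $s\ge s_2$ are genuinely different — a detail the paper leaves implicit — and that verification is sound.
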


\bigskip

\section{Upcoming questions }\label{sect-conclu}
\setcounter{equation}{0}

In the preceding sections, we introduced an approach for establishing set-theoretic properties of a family of sets consisting of holomorphic functions. We demonstrated the effectiveness of this method with a significant example involving sets  defined by \eqref{2-param}. Furthermore, it turns out that this approach relies on previously established characteristics of the hypergeometric function.  For this reason, it appears imperative that prior to effectively disseminating this approach, one should address the following question:

\begin{question}
  Expand Theorem~\ref{lem-kappa} to the case of $\Gauss(1,s;s+1;x)$, $x\in[-1,1]$, or a more general hypergeometric function $\Gauss(m,s;s+n;x)$ instead of $\Gauss(1,s;s+1;-1)$.
\end{question} 

An additional family that can be explored using the presented approach consists of the sets
\[
\mathfrak{B}_s^t:= \left\{f\in\mathcal{A}:\ \left| (s-1) \frac{f(z)}{z} + f'(z) -s \right| \le \frac{t}{1-t}, \  z\in \D\setminus \{0\}\right\},  \quad (s,t)\in\overline\Omega.
\]
These sets were studied in \cite{T-09} within the context of geometric function theory.  A recent investigation delved into the specific case where $\frac{t}{1-t}=1+s$, addressing problems in filtration theory in \cite{E-J-survey} and \cite{E-J-S}. We now pose the following questions:

\begin{question}
  What conditions on a function $T$ provide that the one-parameter family $\left\{\mathfrak{B}_s^{T(s)} \right\}$ forms a filtration?
\end{question} 

\begin{question}
  Is the family $\mathfrak{B}:=\left\{\mathfrak{B}_s^t,\ (s,t)\in\Omega\right\}$   a lattice?
  \end{question} 
  In the case of affirmative answer, the method of finding of the unique supremum and infimum for each pair of sets should be established. Otherwise, one asks about the sets of quasi-suprema and quasi-infima.

\vspace{2mm}
As for a general situation, we have already shown at the end of the previous section that if each pairs of elements of a family has the unique supremum (infimum), then the set of all quasi-suprema (quasi-infima) is a singleton. We do not know whether the converse statement is valid in general. At the same time, known examples lead us to the following

\begin{conjecture}
  A partially ordered family is a lattice if and only if each pair of its elements has a unique quasi-supremum and a unique quasi-minimum.
\end{conjecture}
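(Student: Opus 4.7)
The plan is to prove the equivalence by treating the two implications separately. The forward direction, ``lattice implies unique quasi-extrema,'' is essentially the observation already exploited in Section~\ref{sect-filtra}: assuming $\Gf_0 = \sup(\Gf_1, \Gf_2)$ exists in the family $\Gf$, the set $\Gf_0$ is an upper bound of the pair and no strictly smaller element of $\Gf$ can be an upper bound (otherwise the defining property of the supremum would be contradicted); hence $\Gf_0 \in \mathrm{qsup}(\Gf_1, \Gf_2)$. Any competing quasi-supremum $\widetilde{\Gf}_0$ must satisfy $\Gf_0 \subset \widetilde{\Gf}_0$ by the supremum property, and the minimality clause in the quasi-supremum definition then forces $\widetilde{\Gf}_0 = \Gf_0$. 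The argument for quasi-infima is dual.

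The backward direction, ``unique quasi-extrema implies lattice,'' is the substantive content. Assume every pair in $\Gf$ has a unique quasi-supremum and a unique quasi-infimum, let $\Gf_0$ be the unique quasi-supremum of a fixed pair $\{\Gf_1, \Gf_2\}$, and let $\Gf_*$ be an arbitrary upper bound of that pair. The goal is to prove $\Gf_0 \subset \Gf_*$, which together with its infimum counterpart yields the lattice property. The strategy is to produce a quasi-supremum contained in $\Gf_*$ and appeal to uniqueness. To this end, consider the subfamily
\[
\mathcal{U} = \{X \in \Gf : \Gf_1 \cup \Gf_2 \subset X \subset \Gf_*\},
\]
which is nonempty because it contains $\Gf_*$. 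If a minimal element $\Gf_m \in \mathcal{U}$ exists, then $\Gf_m$ is automatically a minimal upper bound of $\{\Gf_1, \Gf_2\}$ in all of $\Gf$ (any strictly smaller upper bound inside $\Gf$ would itself belong to $\mathcal{U}$), hence a quasi-supremum, and by uniqueness $\Gf_m = \Gf_0 \subset \Gf_*$, as required.

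The main obstacle is the extraction of such a minimal element $\Gf_m$, which in general demands a Zorn-type argument on $(\mathcal{U}, \supset)$: one must show that every descending chain in $\mathcal{U}$ admits a lower bound inside $\Gf$. Without some chain-completeness or descending-chain hypothesis on $\Gf$, a descending chain of upper bounds need not have an infimum in the family, and one can in principle envisage a pathological poset admitting a unique minimal upper bound that is nonetheless incomparable with some other upper bound, thereby failing to be a true supremum. The central task of the proof plan is therefore to identify a structural hypothesis on $\Gf$ guaranteeing that every descending chain of upper bounds has a lower bound in $\Gf$; a natural candidate, satisfied for many set-theoretic families arising in function theory (including the family $\Ao$ of this paper, whose elements are closed convex bodies stable under arbitrary intersections), is closure under intersections of nested subfamilies. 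With such a hypothesis the Zorn step succeeds and the conjecture follows; without it, the plan would pivot towards constructing an explicit counterexample built from a non-well-founded descending chain of upper bounds.
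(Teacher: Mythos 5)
You should first note that the paper itself offers \emph{no proof} of this statement: it is explicitly labelled a conjecture, and the authors write just above it that they ``do not know whether the converse statement is valid in general.'' Only the forward implication is established in the paper (at the end of Section~\ref{sect-filtra}), and your argument for that direction is essentially the same as theirs: the supremum is an upper bound below every upper bound, hence a minimal one, and uniqueness of the quasi-supremum follows from the minimality clause. That half of your proposal is fine.

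The backward implication is where the genuine gap lies, and your own text concedes it. The conjecture as stated carries no chain-completeness or well-foundedness hypothesis on the family $\Gf$, so your Zorn-type extraction of a minimal element of $\mathcal{U} = \{X \in \Gf : \Gf_1 \cup \Gf_2 \subset X \subset \Gf_*\}$ has no foundation: a descending chain of upper bounds need not admit a lower bound in $\Gf$, and a unique minimal upper bound can coexist with an upper bound that does not lie above it, in which case the pair has a unique quasi-supremum but no supremum. Strengthening the hypothesis to ``closed under intersections of nested subfamilies'' proves a different (weaker) statement, not the conjecture; and the alternative branch of your plan --- constructing a counterexample from a non-well-founded chain --- would \emph{refute} the conjecture rather than prove it, and is in any case not carried out (one must also arrange that \emph{every} pair in the counterexample poset has unique quasi-extrema, which is the delicate part). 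In short: direction one is correct and matches the paper's observation; direction two is an open problem that your proposal frames accurately but does not resolve in either direction.
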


\bigskip

\section*{Appendix}\label{sect-append}
\setcounter{equation}{0}

Here we prove Lemma~\ref{lem-uniq-root} that states that {\it the equation $\psi_1(x)=\psi_2(x)$, where
\[
\psi_1(x):=  \frac{2(1+x) }{ x^2}\,  \log\left(1+\frac{x^2}{4(1+x)}\right), \quad \psi_2(x):= \frac{2+x +(1+x)\log(1+x)}{(2+x)^2},
\]
has a unique solution in $(0,\infty)$.}

\begin{proof}
Our plan is the following: first we show that this equation has no solution for `small' $x$. Then we show that there is a unique solution for `large' $x$. In the last step we complete the proof. 

{\bf Step 1.} The inequality $$\zeta-\frac{\zeta^2}2+\frac{\zeta^3}3-\frac{\zeta^4}4<\log(1+\zeta) <\zeta-\frac{\zeta^2}2+\frac{\zeta^3}3,\quad \zeta>0,$$ implies
\begin{eqnarray*}
  \psi_1(x)  &<& \frac12   -\frac{x^2}{16(1+x)} +\frac{x^6}{6\cdot16(1+x)^2}  \\ 
  &=& \frac12 +\frac{x^2}{16}\left( -\frac1{1+x} +\frac{x^4}{6(1+x)^2}  \right)\!, \\
  \psi_2(x) &> & \frac1{2+x} +\frac{1+x}{(2+x)^2}\left( x-\frac{x^2}{2}+\frac{x^3}3-\frac{x^4}{4}\right)  \\
   &=& \frac12 +\frac{x^2}{16}\cdot\frac{1}{(2+x)^2}\left( -\frac{8x}{3} +\frac{4x^2}{3}-4x^3 \right)\! .
\end{eqnarray*}
Thus
\begin{eqnarray*}
\psi_2(x) -\psi_1(x)  > \frac{x^2}{96(1+x)^2}\cdot \phi(x),
\end{eqnarray*}
where  $\phi(x):=6+2x^2-x^4 -10x-24x^3 $. 
It can be easily seen that $\phi$ is a decreasing function that is positive at $x=0.4$. Hence $\psi_2(x)>\psi_1(x)$ in $(0,0.4]$.

{\bf Step 2.} Approximate computation gives us $\psi_1(10)<0.261<0.266<\psi_2(10)$. On the other hand, $\lim\limits_{x\to\infty}\frac{\psi_1(x)}{\psi_2(x)}=2$. Therefore, the equation has at least one solution in $[10,\infty)$.

Consider the equation $\frac{2+x}{\log(1+x)}\psi_1(x)=\frac{2+x}{\log(1+x)}\psi_2(x)$, which is equivalent to the given one. We state that the function in the left-hand side is increasing, while one in the right-hand side is decreasing. Indeed, it can be easily checked  that $\left( \frac{2+x}{\log(1+x)}\psi_2(x) \right)'<0$. 
The differentiation shows that the inequality $\left( \frac{2+x}{\log(1+x)}\psi_1(x) \right)'>0$ is equivalent~to
\[
\left[ 2x^2+2x+(3x+4)\log(1+x) \right] \log\frac{1+x}{1+\frac{x}{2}} >
\left[ 2x+(3x+4)\log(1+x) \right] \log \left(1+\frac{x}{2}\right)\!. 
\]
If $x>10$, then $\log\frac{1+x}{1+\frac{x}{2}}>0.606$. So, in this case it is enough to show that 
\[
1.212x^2>\left[  2x^2+2x+(3x+4)\log(1+x) \right] \log\frac{6+3x}{11}.
\]
 The last inequality follows from elementary calculus. Thus the equation $\psi_1(x)=\psi_2(x)$ has exactly one root in $x>10$.

{\bf Step 3.} To complete the proof, we have to show that there is no solution in $[0.4,10]$. Note that both $\psi_1$ and $\psi_2$ can be analytically  extended to the right half-plane. Hence one can find the number of solutions using the logarithmic residue of the function $\psi_1(z)-\psi_2(z)$ on the boundary of (for instance) the rectangle $\Omega=\left\{z=x+iy:\ 0.4\le x \le10,\ |y|\le2\right\}$. 

The approximate computation using Maple gives
\[
\frac1{2\pi i}\oint_{\partial D}\frac{\psi_1'(z)-\psi_2'(z)}{\psi_1(z)-\psi_2(z)}dz\approx -1\cdot 10^{-10} +0i.
\]
Since the logarithmic residue should be an integer, we conclude that it is zero, that is, there is no solution in $[0.4,10]$. The proof is complete.
\end{proof}

\bigskip

{\bf Acknowledgement}

\noindent The authors are grateful to Guy Katriel for very helpful discussions.

\bigskip

{\bf Declarations} 

\noindent {\bf Data availability} This manuscript has no associated data.

\noindent {\bf Conflict of interest} The authors declare that they have no Conflict of interest.

\noindent {\bf Ethical approval} Not applicable.

\noindent {\bf Financial interests} The authors have no relevant financial or non-financial interests to disclose.

\bigskip


\begin{thebibliography}{999}


\bibitem{BPV02} R. Balasubramanian, S. Ponnusamy and M. Vuorinen, On hypergeometric functions
and function spaces, {\it J. Comput. Appl. Math.} {\bf 139}(2) (2002), 299--322.


\bibitem{BCDES} F. Bracci, M. D. Contreras, S. Díaz-Madrigal, M. Elin and D. Shoikhet,
Filtrations of infinitesimal generators,  \textit{Funct. Approx. Comment. Math.}  \textbf{59} (2018), 99–115. 


\bibitem{D-K-21} A. Dyachenko and  D. Karp, 
Ratios of the Gauss hypergeometric functions with parameters shifted by integers: more on integral representations, {\it Lobachevskii J. Math.} {\bf 42} (2021), 2764--2776.


\bibitem{E-J-survey} M. Elin and F. Jacobzon,
Survey on filtrations (parametric embeddings) of infinitesimal generators, {\it Journ. Analysis}, DOI 10.1007/s41478-024-00754-z. 


\bibitem{E-J-S} M.Elin, F. Jacobzon and D. Shoikhet,
Filtration families of semigroup generators. The Fekete-Szegö problem, {\it Appl. Set-Valued Anal. Optim.} {\bf 6} (2024) 13--29.      


\bibitem{ESS}  M. Elin, D. Shoikhet and T. Sugawa,
Filtration of semi-complete vector fields revisited, in: {\sl Complex analysis and dynamical systems. New trends and open problems}, 93--102, Trends in Math., Birkhäuser/Springer, Cham, 2018.    


\bibitem{GAW} A.~W. Goodman,
{\sl Univalent functions}, Vol 1, Mariner publishing company, Inc., 1983.   


\bibitem{Hal} D. J. Hallenbeck, Convex hulls and extreme points of some families of univalent functions, {\it Trans. Amer. Math. Soc.} {\bf 192} (1974),  285--292.
    
\bibitem{H-K-V}    P. Hariri, R. Klén, and M. Vuorinen, 
{\sl Conformally Invariant Metrics and Quasiconformal Mappings}, Springer Monographs in Mathematics, Springer, Cham, 2020. 


\bibitem{K-K-21} D. Karp and A. Kuznetsov,
 A new identity for the sum of products of the generalized hypergeometric functions, {\it Proc. Amer. Math. Soc.} {\bf149} (2021), 2861--2870.

\bibitem{K-R} Y. C. Kim and R. R{\o}nning,  Integral transforms of certain subclasses of analytic functions, {\it J. Math. Anal. Appl.} {\bf 258} (2001), 466--486.


\bibitem{OP} M. Obradovi\'{c} and S. Ponnusamy, 
Univalence and starlikeness of certain transforms defined by convolution of analytic functions, {\it Journ. Math. Anal. Appl. } {\bf 336} (2007), 758--767.


\bibitem{SW17} T.~Sugawa and L.-M. Wang, Geometric properties of the shifted hypergeometric functions, {\it Complex Anal. Oper. Theory } {\bf 11} (2017), 1879–1893, https://doi.org/10.1007/s11785-017-0674-4.
    
    
\bibitem{T-09} N. Tuneski,
Some simple sufficient conditions for starlikeness and convexity, {\it Appl. Math. Letters} {\bf 22} (2009), 693--697.

 
\bibitem{W22} L.-M. Wang, Mapping properties of the zero-balanced hypergeometric functions, {\it Journ. Math. Anal. Appl.}  {\bf 505} (2022), 125448, https://doi.org/10.1016/j.jmaa.2021.125448.


\end{thebibliography}
\end{document}